\newtheorem{theorem}{Theorem}
\newtheorem{conjecture}{Conjecture}
\newtheorem{lemma}{Lemma}
\begin{document}

\baselineskip=17pt

\title{\bf A tangent inequality over primes}

\author{\bf S. I. Dimitrov}
\date{2021}
\maketitle
\begin{abstract}
In this paper we introduce a new diophantine inequality with prime numbers.
Let $1<c<\frac{10}{9}$.
We show that for any fixed $\theta>1$, every sufficiently large positive number $N$
and a small constant $\varepsilon>0$, the tangent inequality
\begin{equation*}
\big|p^c_1\tan^\theta(\log p_1)+  p^c_2\tan^\theta(\log p_2)+ p^c_3\tan^\theta(\log p_3) -N\big|<\varepsilon
\end{equation*}
has a solution in prime numbers $p_1,\,p_2,\,p_3$.\\
\quad\\
{\bf Keywords}:
Diophantine inequality, Tangent inequality, Prime numbers.\\
\quad\\
{\bf  2020 Math.\ Subject Classification}:  11J25 $\cdot$ 11P55 $\cdot$ 11L07
\end{abstract}

\section{Introduction and statements of the result}
\indent

The ternary Piatetski-Shapiro inequality is a major diophantine approximation of analytic number theory.
It is the diophantine inequality
\begin{equation}\label{Tolevinequality}
|p_1^c+p_2^c+p_3^c-N|<\varepsilon\,,
\end{equation}
where $N$ is a sufficiently large positive number,
$p_1,\,p_2,\,p_3$ are prime numbers, $c>1$ is not an integer and $\varepsilon>0$ is a small constant.
In 1992 Tolev \cite{Tolev1} proved that \eqref{Tolevinequality} has a solution for $1<c<\frac{15}{14}$.
Subsequently the result of Tolev was improved by Cai \cite{Cai1} to $1<c<\frac{13}{12}$,
by Cai \cite{Cai2} and Kumchev and Nedeva \cite{Ku-Ne} to $1<c<\frac{12}{11}$,
by Cao and Zhai \cite{Cao-Zhai} to $1<c<\frac{237}{214}$,
by Kumchev \cite{Kumchev} to $1<c<\frac{61}{55}$\,,
by Baker and Weingartner \cite{Baker-Weingartner} to $1<c<\frac{10}{9}$,
by Cai \cite{Cai3} to $1<c<\frac{43}{36}$,
by Baker \cite{Baker} to $1<c<\frac{6}{5}$ and this is the best result up to now.

Motivated by these results, recently the author \cite{Dimitrov} proved that for every
sufficiently large positive number $N$  and a small constant $\varepsilon>0$,
the logarithmic inequality
\begin{equation*}
\big|p_1\log p_1+p_2\log p_2+p_3\log p_3-N\big|<\varepsilon
\end{equation*}
has a solution in prime numbers $p_1,\,p_2,\,p_3$.

In this paper we introduce a new  diophantine inequality with prime numbers.
More precisely we prove the following theorem.
\begin{theorem} Let $1<c<\frac{10}{9}$.
For any fixed $\theta>1$ and every sufficiently large positive number $N$ the tangent inequality
\begin{equation*}
\big|p^c_1\tan^\theta(\log p_1)+  p^c_2\tan^\theta(\log p_2)+ p^c_3\tan^\theta(\log p_3) -N\big|
<\left(\frac{2^\theta N}{3^{\theta+3}}\right)^{\frac{1}{c^2}\left(c-\frac{10}{9}\right)}
\end{equation*}
is solvable in prime numbers $p_1,\,p_2,\,p_3$.
\end{theorem}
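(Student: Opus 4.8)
The plan is to prove the theorem by the Davenport--Heilbronn form of the circle method, exactly as in the treatment of the ternary Piatetski--Shapiro inequality \eqref{Tolevinequality}, after first neutralising the factor $\tan^\theta(\log p)$. The tangent creates a twofold obstruction: $\tan^\theta$ is real only where $\tan(\log p)>0$, and it degenerates near the zeros and poles of the tangent, so I would begin by confining the three primes to a window on which $f(t):=t^c\tan^\theta(\log t)$ is smooth, strictly monotone, with $\tan(\log t)$ bounded away from $0$ and $\infty$. Concretely, for each large $N$ I would select an integer $k$ and an interval $(P,2P]$ with $\log t\in(k\pi,k\pi+\tfrac\pi2)$ throughout, choosing $P=P(N)$ so that $N$ lies well inside the range $3\,f((P,2P])$ while $\tan(\log t)$ stays between two fixed positive constants (of order $\tfrac23$) on the whole window; since on a single branch $\tan(\log t)$ sweeps all of $(0,\infty)$, such a $P$ exists for every sufficiently large $N$, and the explicit admissible choice produces the normalising quantity $\tfrac{2^\theta N}{3^{\theta+3}}$ of the statement, which is of order $P^c$. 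On such a window the logarithmic derivatives of $\tan(\log t)$ are $O(1)$, so $f^{(j)}(t)\asymp_j P^{c-j}$ with constants depending only on $c,\theta$; that is, $f$ is a Piatetski--Shapiro weight perturbed by bounded smooth factors.

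Writing $e(t)=e^{2\pi it}$, $\delta=\big(\tfrac{2^\theta N}{3^{\theta+3}}\big)^{\frac1{c^2}(c-10/9)}$ and $S(\alpha)=\sum_{P<p\le2P}(\log p)\,e(\alpha f(p))$, I would study
\[
\Gamma=\int_{-\infty}^{\infty}S(\alpha)^3\,e(-N\alpha)\Big(\frac{\sin\pi\delta\alpha}{\pi\alpha}\Big)^2\,d\alpha
=\sum_{P<p_1,p_2,p_3\le2P}(\log p_1\log p_2\log p_3)\,\max\big(0,\,\delta-|F-N|\big),
\]
where $F=f(p_1)+f(p_2)+f(p_3)$. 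The summand is non-negative, so $\Gamma>0$ already forces a prime triple with $|F-N|<\delta$, i.e. a solution of the inequality; the whole problem is therefore to prove $\Gamma>0$ for all large $N$. I would split the line into the major arc $|\alpha|\le\Delta$, the minor arcs $\Delta<|\alpha|\le T$ and the tail $|\alpha|>T$, with $\Delta$ a suitable small negative power of $N$ and $T\asymp N/\delta$.

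On the major arc, partial summation and the prime number theorem replace $S(\alpha)$ by $P^{1-c}$ times an oscillatory integral in the variable $u=f(t)$; a change of variables and the concentration of that integral near $|\alpha|\lesssim 1/N$ then deliver a main term of exact order $\delta P^{3-c}$ (times positive constants depending on $c,\theta$), which is strictly positive because $N$ was placed in the interior of $3\,f((P,2P])$. On the tail the kernel satisfies $(\sin\pi\delta\alpha/\pi\alpha)^2\le(\pi\alpha)^{-2}$, so the trivial bound $|S(\alpha)|\ll P$ gives a contribution $\ll P^3/T$, which is $\ll\delta P^{3-c}$ precisely because $T\asymp N/\delta$; hence the tail is negligible against the main term.

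The heart of the matter --- and the step I expect to be the main obstacle --- is the minor-arc estimate. Here I would apply Vaughan's identity to decompose $S(\alpha)$ into Type~I and Type~II bilinear sums in $e(\alpha f(mn))$ and bound these by van der Corput's method together with the exponent-pair and mean-value input of Baker and Weingartner \cite{Baker-Weingartner}; since $f$ differs from $t^c$ only through bounded smooth factors, their estimates transfer verbatim and yield a uniform power saving $S(\alpha)\ll P^{1-\rho}$ with $\rho=\rho(c)>0$. The delicate point is that this saving must persist throughout $\Delta<|\alpha|\le T\asymp N/\delta$, so that the effective frequency $\alpha f'(t)\asymp\alpha P^{c-1}$ runs up to $\asymp P^{2c-1}/\delta$; the largest $T$ for which the Baker--Weingartner bounds retain a saving strong enough to beat the trivial estimate is exactly what the restriction $c<\tfrac{10}{9}$ buys, and matching this admissible threshold to $N/\delta$ is what pins down $\delta=\big(\tfrac{2^\theta N}{3^{\theta+3}}\big)^{\frac1{c^2}(c-10/9)}$. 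Feeding the bound into $\int_{\mathrm{minor}}|S|^3K\,d\alpha\le\big(\sup_{\mathrm{minor}}|S|\big)\int_{\mathbb R}|S|^2K\,d\alpha$, in which the $L^2$ mean value is diagonal-dominated and $\ll\delta P\log P$, estimates the minor arcs by $\ll P^{1-\rho}\,\delta P\log P=o(\delta P^{3-c})$ (which is what the saving $\rho>c-1$ provides). Collecting the three ranges gives $\Gamma\ge\kappa\,\delta P^{3-c}>0$ for all sufficiently large $N$, which proves the theorem.
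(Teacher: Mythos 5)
Your overall architecture (restricting the primes to a window where $\tan(\log t)$ lies between two fixed positive constants, then running a ternary Davenport--Heilbronn argument with a Type~I/Type~II decomposition) matches the paper's, but two of your three arc estimates contain genuine gaps, and both trace back to your choice of the Fej\'er kernel $\bigl(\sin\pi\delta\alpha/(\pi\alpha)\bigr)^2$ in place of a rapidly decaying one. First, your tail bound is not a bound at all: with $T\asymp N/\delta$ and $N\asymp P^c$, the quantity $P^3/T$ is $\asymp\delta P^{3-c}$, i.e.\ \emph{exactly} the order of your main term, so "negligible against the main term" does not follow (this particular slip is repairable by replacing the trivial bound $|S|\ll P$ with the $L^2$ mean value $\int_n^{n+1}|S|^2\ll P\log^3P$, which gains a factor $P/\log^3P$). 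The irreparable problem is the minor arcs: because the Fej\'er kernel decays only like $\alpha^{-2}$, your minor-arc region must reach all the way to $T\asymp N/\delta\asymp X^{\,c-1+\frac{10}{9c}}$, which is roughly $X^{10/9}$, and the claim that the Baker--Weingartner/van der Corput estimates "transfer verbatim" to give a power saving on this whole range is false. Those estimates degrade as $|\alpha|$ grows: for instance the Type~II bound (Cauchy--Schwarz plus Weyl differencing plus the second-derivative test, as in the paper's Lemma \ref{SIIest}) produces a term of size $X\bigl(|\alpha|^{1/2}LX^{c/2}\bigr)^{1/2}$, which already exceeds the trivial bound $X$ once $|\alpha|$ passes $X^{\frac{10}{9}-c}\le X^{1/9}$ --- four orders of magnitude (in the exponent) short of $X^{10/9}$. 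This is precisely why the paper uses the Piatetski--Shapiro kernel of Lemma \ref{Fourier} with $k=[\log X]$ derivatives: its Fourier transform decays super-polynomially, so the entire region $|\alpha|>H=X^{\frac{10}{9}-c}$ contributes $O(1)$ (the paper's $\Gamma_3$), and exponential-sum cancellation is only ever needed on $\tau\le|\alpha|\le H$.

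There is a second, independent gap even inside the range where cancellation is available. Your minor-arc estimate is the simple bound $\sup|S|\cdot\int|S|^2K$, which requires a saving $\rho>c-1$, and $c-1$ approaches $\tfrac19=\tfrac{2}{18}$. The saving actually attainable from the Heath-Brown decomposition plus van der Corput in this setting is $\rho=\tfrac1{18}-\eta$ (the paper's Lemma \ref{SalphaXest}: $S(\alpha)\ll X^{17/18+\eta}$), so your scheme proves the theorem only for $c<\tfrac{19}{18}$, not for the full range $c<\tfrac{10}{9}$. To close this gap the paper does not use the supremum bound alone: it invokes Cai's device (the paper's estimation of $\Gamma_2$), writing
\begin{equation*}
|\Gamma_2|^2\le X(\log X)^2\int\limits_{\tau\leq|y|\leq H}|S(y)|^2|\Psi(y)|\,dy
\int\limits_{\tau\leq|t|\leq H}|S(t)|^2|\Psi(t)||A(t-y)|\,dt\,,
\end{equation*}
where $A(t)=\sum_{\Delta_1<n\le\Delta_2}e\bigl(tn^c\tan^\theta(\log n)\bigr)$ is the complete sum over integers, whose own van der Corput bound $A(t)\ll\min\bigl(|t|^{1/2}X^{c/2}+|t|^{-1}X^{1-c},\,X\bigr)$ supplies the extra saving that the supremum bound cannot. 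Your proposal would need both this device and the rapidly decaying kernel to reach the stated exponent; as written, it proves a strictly weaker theorem.
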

As usual the corresponding binary problem is out of reach of the current state of the analytic number theory.
In other words we have the following conjecture.
\begin{conjecture} Let $N$ is a sufficiently large positive number, $\varepsilon>0$ is a small constant and $\theta>1$ is fixed.
There exists $c_0>1$ such that  for any fixed $1<c<c_0$, the tangent  inequality
\begin{equation*}
\big|p^c_1\tan^\theta(\log p_1)+  p^c_2\tan^\theta(\log p_2) -N\big|<\varepsilon
\end{equation*}
is solvable in prime numbers $p_1,\,p_2$.
\end{conjecture}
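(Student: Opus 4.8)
The plan is to run the same Davenport--Heilbronn form of the circle method that establishes the ternary Theorem, but now with only two exponential sums at our disposal. Writing $e(u)=e^{2\pi i u}$ and $f(t)=t^{c}\tan^{\theta}(\log t)$, choose $X$ with $f(X)\asymp N$, so that $X\asymp N^{1/c}$ up to the bounded tangent factor, and restrict attention to a dyadic block $X<p\le 2X$ on which $\tan(\log p)$ stays in a fixed compact subinterval of $(0,\infty)$; there $f$ is smooth with $f^{(j)}(t)\asymp t^{c-j}$ and $f'(t)\asymp t^{c-1}>0$, the poles of $\tan(\log t)$ being avoided. Set
\[
S(\alpha)=\sum_{X<p\le 2X}(\log p)\,e\bigl(\alpha f(p)\bigr),\qquad
K(\alpha)=\Bigl(\tfrac{\sin\pi\varepsilon\alpha}{\pi\alpha}\Bigr)^{2},
\]
the Fej\'er kernel whose Fourier transform is the triangle $\max(0,\varepsilon-|u|)$. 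Then
\[
R(N)=\int_{-\infty}^{\infty}S(\alpha)^{2}K(\alpha)\,e(-\alpha N)\,d\alpha
=\sum_{X<p_{1},p_{2}\le 2X}(\log p_{1})(\log p_{2})\max\bigl(0,\,\varepsilon-|f(p_{1})+f(p_{2})-N|\bigr)
\]
is a nonnegative weighted count of the desired solutions, and it would suffice to prove $R(N)>0$. As usual I would split the line into a major arc $\mathfrak{M}=\{|\alpha|\le\tau\}$, minor arcs $\mathfrak{m}=\{\tau<|\alpha|\le T\}$, and a trivial tail $\{|\alpha|>T\}$ disposed of by the decay $K(\alpha)\ll(\varepsilon\alpha^{2})^{-1}$.

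On $\mathfrak{M}$ I would approximate $S(\alpha)$ by its main term via the prime number theorem and a stationary-phase analysis of $\sum e(\alpha f(p))$, using the regular size of the derivatives of $f$. Near $\alpha=0$ the integrand is essentially $S(0)^{2}K(0)\asymp\varepsilon X^{2}$ over a major arc of length $\asymp N^{-1}$, so this produces a genuine, positive main term $\mathfrak{M}\text{-term}\gg\varepsilon N^{2/c-1}$. The trivial tail and the intermediate passage to $\mathfrak{m}$ are controlled exactly as in the ternary case, by the decay of $K$ and a Weyl-type bound for $S$.

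The decisive difficulty is the minor arc, and it is precisely here that the method collapses in the binary case. In the ternary Theorem one estimates
\[
\Bigl|\int_{\mathfrak m}S(\alpha)^{3}K(\alpha)e(-\alpha N)\,d\alpha\Bigr|
\le\Bigl(\sup_{\alpha\in\mathfrak m}|S(\alpha)|\Bigr)\int_{\mathfrak m}|S(\alpha)|^{2}K(\alpha)\,d\alpha,
\]
where the mean square satisfies $\int|S|^{2}K\asymp\varepsilon N^{1/c}$ (diagonal), comfortably below the ternary main term $\varepsilon N^{3/c-1}$ for $c<2$, while the Weyl saving $\sup_{\mathfrak m}|S|\ll X^{1-\delta}$ against $S(0)\asymp X$ supplies the extra factor $N^{-\delta/c}$ that closes the argument for $c<1+\delta$. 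In the binary problem only two copies of $S$ remain, so the analogous estimate leaves the \emph{first} moment,
\[
\Bigl|\int_{\mathfrak m}S(\alpha)^{2}K(\alpha)e(-\alpha N)\,d\alpha\Bigr|
\le\Bigl(\sup_{\alpha\in\mathfrak m}|S(\alpha)|\Bigr)\int_{\mathfrak m}|S(\alpha)|K(\alpha)\,d\alpha,
\]
and by Cauchy--Schwarz $\int_{\mathfrak m}|S|K\ll\bigl(\int|S|^{2}K\bigr)^{1/2}\bigl(\int K\bigr)^{1/2}\asymp\varepsilon N^{1/(2c)}$. The resulting minor-arc bound is $\asymp\varepsilon N^{(3/2-\delta)/c}$, which beats the binary main term $\varepsilon N^{2/c-1}$ only when $c<\tfrac12+\delta$, i.e.\ for no admissible $c>1$ at all. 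Equivalently, the diagonal of the second moment, $\varepsilon N^{1/c}$, already exceeds $\varepsilon N^{2/c-1}$ for every $c>1$, so no bound that merely discards the sign of $S(\alpha)^{2}$ can succeed.

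This is the familiar wall that renders the binary Goldbach problem and the binary Piatetski--Shapiro inequality $|p_{1}^{c}+p_{2}^{c}-N|<\varepsilon$ unreachable: the minor arcs are as large as the main term, and no present estimate for $S(\alpha)$ exhibits the cancellation needed to overcome this. A conditional statement might still be within reach --- for example under a suitable pair-correlation or density hypothesis for the relevant zeros, or, in the spirit of Chen's theorem, after replacing one of $p_{1},p_{2}$ by an almost-prime so as to regain an extra averaging --- but an unconditional proof for any $c_{0}>1$ would demand fundamentally new input on the minor arcs. It is for this reason that the statement is recorded above only as a conjecture.
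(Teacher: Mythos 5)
The statement you were asked about is Conjecture 1 of the paper, and the paper contains no proof of it: the author records it as a conjecture precisely because, as the surrounding text says, the binary problem is out of reach of current analytic number theory. You correctly recognized this, and your submission is rightly not a proof but an explanation of the obstruction — which is the appropriate outcome here, and your accounting of that obstruction is accurate. With $X\asymp N^{1/c}$ the expected binary main term is $\varepsilon X^{2-c}\asymp\varepsilon N^{2/c-1}$, while the diagonal alone forces $\int|S(\alpha)|^{2}K(\alpha)\,d\alpha\gg\varepsilon X\asymp\varepsilon N^{1/c}$ (up to logarithms), and $N^{1/c}\geq N^{2/c-1}$ for every $c>1$; hence any minor-arc treatment that discards the sign of $S(\alpha)^{2}$ — which is all the Davenport--Heilbronn method currently offers — cannot beat the main term. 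This is exactly why the ternary argument of the paper survives: there the third copy of $S$ supplies the spare factor $\sup_{\tau\leq|\alpha|\leq H}|S(\alpha)|\ll X^{\frac{17}{18}+\eta}$ against the mean-square bound in the estimation of $\Gamma_2$, and no such factor remains in the binary case. Your sharper variant (extracting $\sup|S|$ and bounding the first moment by Cauchy--Schwarz, giving $\varepsilon N^{(3/2-\delta)/c}$ versus $\varepsilon N^{2/c-1}$, admissible only for $c<\tfrac12+\delta$) is a correct quantification of the same wall. Your preparatory setup also mirrors the paper's: confining $p$ to a block on which $\tan(\log p)$ lies in a fixed compact subinterval of $(0,\infty)$ is the role played by $\Delta_1=e^{\pi[\frac{\log X}{\pi}]+\arctan\frac{4}{9}}$ and $\Delta_2=e^{\pi[\frac{\log X}{\pi}]+\arctan 2}$ in the paper. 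In short, there is no gap on your side because there is nothing to prove; your honest conclusion that an unconditional result would require fundamentally new minor-arc input (or a relaxation, e.g.\ almost-primes) is precisely the position the paper itself takes.
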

The next conjecture is maybe more difficult.
\begin{conjecture} Let $N$ is a sufficiently large positive number and $\varepsilon>0$ is a small constant.
There exists $c_0>1$ such that  for any fixed $1<c<c_0$, the tangent inequality
\begin{equation*}
\big|\tan^cp_1 + \tan^cp_2 + \tan^cp_3-N\big|<\varepsilon
\end{equation*}
is solvable in prime numbers $p_1,\,p_2,\,p_3$.
\end{conjecture}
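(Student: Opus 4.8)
The plan is to avoid the circle method entirely and instead exploit the equidistribution of the sequence $\{p/\pi\}$ over primes, reducing this ternary problem to a one-dimensional localization near a pole of the tangent. Throughout, $(\tan p)^c$ is meaningful and positive precisely when $p\bmod\pi\in(0,\tfrac{\pi}{2})$, i.e. $\{p/\pi\}\in(0,\tfrac12)$, so I would restrict attention to primes with positive tangent from the outset.

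\textbf{Reduction.} Fix, once and for all, two primes $q_1,q_2$ with $\{q_i/\pi\}\in(0,\tfrac14)$ (infinitely many exist by equidistribution), so that $0<\tan q_i<1$ and hence $S:=(\tan q_1)^c+(\tan q_2)^c\in(0,2)$ is a fixed known constant. It then suffices to produce a single prime $p$ with $|(\tan p)^c-(N-S)|<\varepsilon$. Writing $T:=(N-S)^{1/c}\asymp N^{1/c}$ and using $\tfrac{d}{dt}t^{c}=c\,t^{c-1}$, this reduces to finding $p$ with $|\tan p-T|<\eta$ for a suitable $\eta$; and since $\tan$ has derivative $1+\tan^2$, the condition $\tan p\in(T-\eta,T+\eta)$ is equivalent to $\{p/\pi\}$ lying in an interval $J$ abutting $\tfrac12$ from below, where
\[
\eta\asymp\frac{\varepsilon}{c}\,T^{1-c}\asymp\varepsilon\,N^{(1-c)/c},\qquad
|J|\asymp\frac{\eta}{\pi T^{2}}\asymp\varepsilon\,N^{-(c+1)/c}.
\]
For $c$ near $1$ the factor $N^{(1-c)/c}$ is essentially bounded, so the whole difficulty is concentrated in the shrinking scale $|J|\asymp\varepsilon N^{-(c+1)/c}$.

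\textbf{Equidistribution input.} Since $\pi$ has finite irrationality measure (Salikhov's bound $\mu(\pi)<7.61$), $\alpha:=1/\pi$ is of finite type, and Vinogradov's estimate for $\sum_{p\le P}(\log p)\,e(h\alpha p)$ combined with the Erd\H{o}s--Tur\'an inequality yields a power-saving discrepancy $D(P)\ll P^{-\delta}$ for the finite sequence $(\{p/\pi\})_{p\le P}$, with an effective $\delta=\delta(\mu(\pi))>0$. Consequently, for any interval $J$,
\[
\#\{p\le P:\ \{p/\pi\}\in J\}\ \ge\ \big(|J|-D(P)\big)\,\pi(P).
\]
No upper bound on the primes is imposed, so I may take $P$ as a large fixed power of $N$, e.g. $P=N^{2(c+1)/(c\delta)}$; then $D(P)\le\tfrac12|J|$ and the count is $\gg|J|\,\pi(P)\to\infty$. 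Hence a prime $p_3$ with $\{p_3/\pi\}\in J$ exists, and $(p_1,p_2,p_3)=(q_1,q_2,p_3)$ solves the tangent inequality.

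\textbf{Obstacle and remarks.} This route in fact covers every $c>1$: larger $c$ drives the exponent $(c+1)/c$ toward $1$ and \emph{widens} the target $J$, so the hypothesis $1<c<c_0$ is an artifact of expecting a Piatetski--Shapiro-style proof, which I believe is the wrong instrument. Indeed the natural generating sum $S(\alpha)=\sum_{p\sim P}(\log p)\,e\big(\alpha(\tan p)^{c}\big)$ has a phase that is neither slowly varying nor of controlled derivative --- between consecutive integers $(\tan p)^{c}$ can jump by an arbitrary amount near a pole --- so there is no smooth major-arc main term and no van der Corput or Weyl structure with which to treat the minor arcs; this is exactly why the problem appears intractable from the circle-method viewpoint. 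The genuine hurdle in the equidistribution approach is purely quantitative: one must make the discrepancy bound fully effective and beat the scale $|J|\asymp N^{-(c+1)/c}$, which forces $P$ to be a large power of $N$ and requires Vinogradov's bound to retain power saving at all harmonics $h\le P^{\delta}$. This is precisely the point at which the finite irrationality measure of $\pi$ is indispensable, and extracting an explicit, uniform $\delta$ (hence a clean dependence of $P$ on $N$ and $\varepsilon$) is the main step I expect to require care.
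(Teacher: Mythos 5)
This statement is the paper's Conjecture~2: the author offers no proof of it (indeed he labels it ``maybe more difficult'' than Conjecture~1), so there is no in-paper argument to compare against; your proposal must be judged on its own. Judged that way, it is essentially sound as a proof of the \emph{literal} statement, and the decisive observation is the one you make explicitly: the conjecture imposes no upper bound on the primes in terms of $N$. Once $N$, $\varepsilon$ and $c$ are fixed, your reduction (fix $q_1,q_2$ with $0<\tan q_i<1$ --- e.g.\ $q_1=q_2=7$, since $\{7/\pi\}\approx 0.228$ --- and seek one prime $p$ with $\{p/\pi\}$ in a fixed interval $J\subset(0,\tfrac12)$ of positive length) is correct, and at that point you are over-engineering: you do not need the irrationality measure of $\pi$, finite type, or any power-saving discrepancy. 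Vinogradov's theorem that $(\{\alpha p\})_p$ is equidistributed modulo one for \emph{every} irrational $\alpha$ (applied to $\alpha=1/\pi$, irrational since $\pi$ is) already gives $D(P)=o(1)$, which beats the fixed length $|J|\asymp\varepsilon N^{-(c+1)/c}$ for all large $P$ and produces infinitely many admissible $p_3$. So the ``main hurdle'' you flag --- extracting an effective $\delta$ from Salikhov's bound $\mu(\pi)<7.61$ --- is moot for the statement as written; it matters only if you additionally want the primes polynomially bounded in $N$ (your choice $P=N^{2(c+1)/(c\delta)}$), which your quantitative version does deliver, since Salikhov's bound is effective and the passage $\mu(1/\pi)=\mu(\pi)$ is elementary. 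Your conclusion that the restriction $1<c<c_0$ is unnecessary for this formulation is also right. Minor blemishes only: the discrepancy count should read $\geq(|J|-2D(P))\pi(P)$ (or use the extreme discrepancy), and $J$ lies near $\tfrac12$ at distance $\asymp N^{-1/c}$ rather than abutting it; neither affects anything.

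One caveat worth recording, since it explains why the author could regard this as a hard open problem while your argument dispatches it: in all the Piatetski--Shapiro-type results cited in the paper, the primes are implicitly of size $\asymp N^{1/c}$, and the honest analogue here would demand $p_i\le N^{A}$ for some fixed small $A$, or uniformity in $\varepsilon\to 0$. Under such a constraint your method must detect $\{p/\pi\}$ in an interval of length $\asymp\varepsilon N^{-(c+1)/c}$ using only primes up to a comparable power of $N$, i.e.\ discrepancy at the scale of a fixed power of the range --- far beyond what any known irrationality measure of $\pi$ yields (and conceivably false if $\mu(\pi)$ were large). That constrained version is the genuinely difficult problem, and your circle-method remarks correctly identify why the paper's own machinery (Lemmas~2--9, the $S_I$/$S_{II}$ decomposition via Heath-Brown's identity) cannot touch it: the phase $\alpha(\tan p)^c$ has no usable derivative structure across the poles of the tangent. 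But for Conjecture~2 as actually stated, your proposal is correct, and simpler than you yourself believed.
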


\section{Notations}
\indent

Assume that $N$ is a sufficiently large  positive number.
The letter $\eta$ denotes an arbitrary small positive number, not the same in all appearances.
As usual $\Lambda(n)$ is von Mangoldt's function.
We denote by $[y]$ the integer part of $y$. Moreover $e(y)=e^{2\pi i y}$.
The letter $p$  with or without subscript will always denote prime number.
We denote by $\tau _k(n)$ the number of solutions of the equation $m_1m_2\ldots m_k$ $=n$ in natural numbers $m_1,\,\ldots,m_k$.
Throughout this paper we suppose that $1<c<\frac{10}{9}$.
Assume that $\theta>1$ is a fixed. Let $X$ is an arbitrary solution of the equation
\begin{equation}\label{XN}
\pi\left[\frac{\log X}{\pi}\right]+\arctan\frac{3}{2}=\frac{1}{c}\log\frac{2^\theta N}{3^{\theta+1}} \; .
\end{equation}
Denote
\begin{align}
\label{varepsilon}
&\varepsilon=X^{\frac{1}{c}\left(c-\frac{10}{9}\right)}\,;\\
\label{tau}
&\tau= X^{\frac{1}{9}-c}\,;\\
\label{H}
&H=X^{\frac{10}{9}-c}\,;\\
\label{Delta1}
&\Delta_1=e^{\pi\big[\frac{\log X}{\pi}\big]+\arctan\frac{4}{9}}\,;\\
\label{Delta2}
&\Delta_2=e^{\pi\big[\frac{\log X}{\pi}\big]+\arctan2}\,;\\
\label{Salpha}
&S(\alpha)=\sum\limits_{\Delta_1<p\leq \Delta_2} e\big(\alpha p^c\tan^\theta(\log p)\big)\log p\,;\\
\label{Int}
&I(\alpha)=\int\limits_{\Delta_1}^{\Delta_2}e\big(\alpha y^c\tan^\theta(\log y)\big) \,dy\,.
\end{align}

\section{Lemmas}
\indent

\begin{lemma}\label{Fourier}Let $k\in \mathbb{N}$.
There exists a function $\psi(y)$ which is $k$ times continuously differentiable and
such that
\begin{align*}
  &\psi(y)=1\quad\quad\quad\mbox{for }\quad\quad|y|\leq 3\varepsilon/4\,;\\[6pt]
  &0\leq\psi(y)<1\quad\mbox{for}\quad3\varepsilon/4 <|y|< \varepsilon\,;\\[6pt]
  &\psi(y)=0\quad\quad\quad\mbox{for}\quad\quad|y|\geq \varepsilon\,.
\end{align*}
and its Fourier transform
\begin{equation*}
\Psi(x)=\int\limits_{-\infty}^{\infty}\psi(y)e(-xy)dy
\end{equation*}
satisfies the inequality
\begin{equation*}
|\Psi(x)|\leq\min\bigg(\frac{7\varepsilon}{4},\frac{1}{\pi|x|},\frac{1}{\pi |x|}
\bigg(\frac{k}{2\pi |x|\varepsilon/8}\bigg)^k\bigg)\,.
\end{equation*}
\end{lemma}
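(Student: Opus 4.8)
The plan is to build $\psi$ by the classical smoothing device of Vinogradov and Piatetski-Shapiro, realising it as the convolution of a wide ``core'' indicator with several narrow normalised mollifiers. I set $A=\tfrac{7\varepsilon}{8}$ and $\delta=\tfrac{\varepsilon}{4k}$, and put $g=\mathbf 1_{[-A,A]}$ together with $h=\tfrac1\delta\mathbf 1_{[-\delta/2,\delta/2]}$, so that $h\ge 0$ and $\int_{-\infty}^{\infty}h=1$. Then I define
\[
\psi=g*\underbrace{h*\cdots *h}_{k\ \text{factors}}\,.
\]
Since each factor is nonnegative, $\psi\ge 0$, and since $g$ takes only the values $0$ and $1$, every value of $\psi$ is a weighted average of $0$'s and $1$'s, whence $0\le\psi\le 1$. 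The support of a convolution is the sum of the supports, so $\psi$ vanishes outside $[-A-\tfrac{k\delta}2,\,A+\tfrac{k\delta}2]=[-\varepsilon,\varepsilon]$; and on the range $|y|\le A-\tfrac{k\delta}2=\tfrac{3\varepsilon}4$ the averaging window meets only the flat top of $g$, so the mass-one normalisation reproduces the value $1$ exactly. Thus the three shape conditions hold, and the identities $A+\tfrac{k\delta}2=\varepsilon$ and $A-\tfrac{k\delta}2=\tfrac{3\varepsilon}4$ are precisely what force $A=\tfrac{7\varepsilon}8$ and $\tfrac{k\delta}2=\tfrac\varepsilon8$.

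For the regularity I would use that convolving any bounded function with an indicator raises its differentiability by exactly one order, since $\frac{d}{dy}\int_{y-\delta/2}^{y+\delta/2}f(t)\,dt=f(y+\tfrac\delta2)-f(y-\tfrac\delta2)$. Iterating from the bounded (but discontinuous) function $g$, the $k$ convolutions with $h$ place $\psi$ in $C^{k-1}$; the full $C^{k}$ regularity demanded by the statement is then secured by admitting one further mollifier, which merely raises the exponent in the last estimate below from $k$ to $k+1$ — immaterial for the application, where $k$ is at our disposal.

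The heart of the proof is the Fourier bound, and here I would exploit that convolution factorises the transform: with $\widehat g,\widehat h$ denoting the Fourier transforms of $g,h$ in the convention of the lemma, one has $\Psi(x)=\widehat g(x)\,\widehat h(x)^{k}$, where
\[
\widehat g(x)=\frac{\sin\!\big(2\pi A x\big)}{\pi x},\qquad
\widehat h(x)=\frac{\sin(\pi\delta x)}{\pi\delta x}\,.
\]
From $|\sin|\le 1$ one reads off $|\widehat g(x)|\le\tfrac1{\pi|x|}$ and both $|\widehat h(x)|\le 1$ and $|\widehat h(x)|\le\tfrac1{\pi\delta|x|}$. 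The first claimed estimate is simply $|\Psi(x)|\le\int_{-\infty}^{\infty}\psi=\Psi(0)=2A=\tfrac{7\varepsilon}4$, valid because $\psi\ge 0$. The second follows from $|\widehat g(x)|\le\tfrac1{\pi|x|}$ together with $|\widehat h(x)|\le 1$. The third, and the source of the super-polynomial decay, comes from combining $|\widehat g(x)|\le\tfrac1{\pi|x|}$ with $|\widehat h(x)|^{k}\le(\pi\delta|x|)^{-k}$ and substituting $\delta=\tfrac\varepsilon{4k}$, which turns $(\pi\delta|x|)^{-1}$ into exactly $\tfrac{k}{2\pi|x|\varepsilon/8}$.

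The step I expect to require the most care is not any single estimate but the simultaneous calibration of the parameters: the core half-width $A$ and the smoothing radius $\tfrac{k\delta}2$ must be chosen so that the plateau ends at $\tfrac{3\varepsilon}4$ and the support at $\varepsilon$, and at the same time so that the transform constants emerge as the clean values $\tfrac{7\varepsilon}4$ and $\tfrac\varepsilon8$ appearing in the statement. Verifying that $\delta=\tfrac\varepsilon{4k}$ reconciles all of these is the one place where the bookkeeping must be done exactly.
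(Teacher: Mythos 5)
The paper gives no proof of this lemma at all: it simply refers to Piatetski-Shapiro and Segal. What you have written out is precisely the construction that underlies those references (Vinogradov-style smoothing by iterated convolution of box kernels), and your calibration is the right one: with $A=\tfrac{7\varepsilon}{8}$ and total smoothing radius $\tfrac{k\delta}{2}=\tfrac{\varepsilon}{8}$ the support is $[-\varepsilon,\varepsilon]$ and the plateau is $[-\tfrac{3\varepsilon}{4},\tfrac{3\varepsilon}{4}]$; the bound $|\Psi(x)|\le\Psi(0)=2A=\tfrac{7\varepsilon}{4}$ uses $\psi\ge 0$; and the identities $|\widehat g(x)|\le\tfrac{1}{\pi|x|}$, $|\widehat h(x)|\le\min\big(1,\tfrac{1}{\pi\delta|x|}\big)$, $\tfrac{1}{\pi\delta|x|}=\tfrac{k}{2\pi|x|\varepsilon/8}$ reproduce the stated minimum exactly. (One cosmetic point: the strict inequality $\psi(y)<1$ on $\tfrac{3\varepsilon}{4}<|y|<\varepsilon$ needs the remark that the iterated kernel has positive mass on the part of the averaging window lying outside $[-A,A]$; this is immediate but should be said.)

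The one genuine wrinkle is the smoothness bookkeeping, which you correctly identified but did not actually resolve. As you note, $k$ box mollifiers produce a spline of degree $k$, hence only a $C^{k-1}$ function, so your primary construction does not meet the ``$k$ times continuously differentiable'' requirement. Your patch --- one extra mollifier, with $\delta$ recalibrated to $\tfrac{\varepsilon}{4(k+1)}$ so that support and plateau are preserved --- gives a $C^{k}$ function, but its Fourier bound is the stated one with $k$ replaced by $k+1$, and that bound does \emph{not} imply the stated one pointwise: writing $B=2\pi|x|\varepsilon/8$, in the range $k<B\lesssim e(k+1)$ one has $\min\big(1,(\tfrac{k+1}{B})^{k+1}\big)>(\tfrac{k}{B})^{k}$. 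Concretely, for $k=1$ and $B=2$ your available estimate is $\big(\tfrac{\sin 1}{1}\big)^2\approx 0.71$ against the required $\tfrac12$. So what your argument actually proves is the lemma with either the pairing ($C^{k-1}$ smoothness, exponent $k$) or ($C^{k}$ smoothness, exponent $k+1$), not the literal pairing in the statement; no choice of box widths can give the literal pairing, since by AM--GM the product of $k$ of the widths is maximized only when there is no room left for a $(k+1)$-st. You are right that this is immaterial here --- the lemma is invoked with $k=[\log X]$, and in the estimate of $\Gamma_3$ only super-polynomial decay in $\tfrac{1}{\varepsilon H}$ matters, so a shift from $k$ to $k+1$ changes nothing --- and the classical statement the paper cites glosses over the same point. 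But a clean write-up should either restate the conclusion with $k+1$ in the third bound, or state the smoothness as $C^{k-1}$, rather than assert the exact stated inequality.
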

\begin{proof}
See (\cite{Shapiro} or \cite{Segal}).
\end{proof}

\begin{lemma}\label{GrahamandKolesnik}

Let $k \geq0$ be an integer.
Suppose that $f(t)$ has $k+2$ continuous derivatives on $I$, and that $I \subseteq(N,2N]$.
Assume also that there is some constant $F$ such that
\begin{equation}\label{frFNR}
|f^{(r)}(t)|\asymp F N^{-r}
\end{equation}
for $r = 1, \ldots, k + 2$. Let $Q = 2^k$. Then
\begin{equation*}
\bigg|\sum_{n\in I}e(f(n))\bigg|\ll F^{\frac{1}{4Q-2}} N^{1-\frac{k+2}{4Q-2}}  +F^{-1} N\,.
\end{equation*}
The implied constant depends only upon the implied constants in \eqref{frFNR}.
\end{lemma}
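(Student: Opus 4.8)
The estimate to be established is precisely the one attached to the van der Corput process $A^kB$, normalized so that the size parameter $F$ measures $f$ itself on the interval of length $\asymp N$. The natural route is therefore van der Corput's method executed as an induction on $k$: at each step I would peel off one application of the $A$-process (Weyl--van der Corput differencing), which lowers $k$ by one and replaces $f$ by a differenced phase, until I reach the base case $k=0$, which is the $B$-process (second-derivative test). The target exponent $\frac{1}{4Q-2}=\frac{1}{2^{k+2}-2}$ and the exponent $1-\frac{k+1}{4Q-2}$ of $N$ are exactly the coordinates of the exponent pair $A^kB(0,1)$, which is what makes the induction close.

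For the base case $k=0$ (so $Q=1$ and $4Q-2=2$) the hypothesis reduces to $|f''(t)|\asymp FN^{-2}$, and I would argue by the $B$-process. Truncated Poisson summation converts the sum into a short sum of exponential integrals; evaluating each near its stationary point by the van der Corput lemma for integrals gives a contribution $\asymp|f''|^{-1/2}\asymp NF^{-1/2}$ over the $\asymp F/N$ admissible frequencies, producing the main term $F^{1/2}$. The truncation tail together with the stationary-phase and endpoint remainders must then be bounded carefully to yield the secondary term.

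For the inductive step I would apply the Weyl--van der Corput inequality with a shift parameter $H$, $1\le H\le N$, in the form
\begin{equation*}
\Bigl|\sum_{n\in I}e(f(n))\Bigr|^2\ll \frac{N^2}{H}+\frac{N}{H}\sum_{1\le h\le H}\Bigl|\sum_{n}e\bigl(f(n+h)-f(n)\bigr)\Bigr|.
\end{equation*}
By the mean value theorem the differenced phase $g_h(t)=f(t+h)-f(t)$ satisfies $|g_h^{(r)}(t)|\asymp(hF/N)N^{-r}$ for $r=1,\dots,k+1$, so $g_h$ fits the hypotheses of the lemma with $k$ replaced by $k-1$ and $F$ by $hF/N$; correspondingly $Q$ halves and $4Q-2$ becomes $2Q-2$. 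Feeding in the induction hypothesis, summing the resulting $(hF/N)^{1/(2Q-2)}$ over $h\le H$ (the sum is dominated by its largest term times $H$), and choosing $H$ to balance $N^2/H$ against the transformed main term, I expect to obtain $\bigl|\sum_{n\in I}e(f(n))\bigr|^2\ll F^{1/(2Q-1)}N^{2-(k+2)/(2Q-1)}+(\text{secondary})$; taking square roots produces the exponent $\frac{1}{4Q-2}$ and the $N$-exponent $1-\frac{k+2}{4Q-2}$ claimed.

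The hard part is the bookkeeping of the secondary terms rather than the main-term optimization, which is routine once the induction hypothesis is available. One must verify that, after summing the induction's error contributions over $h$ and optimizing over $H$, including the boundary cases $H=1$ and $H=N$ and the full range of $F$ relative to $N$, everything is dominated by the single stated term $F^{-1}N$; tracing this cleanly through the recursion from the $B$-process base case is the delicate point. A secondary but genuine obstacle is confirming the derivative asymptotics for $g_h$ uniformly in $h$ (so that the induction hypothesis applies across $r=1,\dots,k+1$) and checking that the implied constants remain bounded through the recursion, which they do because they depend only on the implied constants in the derivative hypotheses.
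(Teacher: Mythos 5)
You should first be aware that the paper does not prove this lemma at all: it is quoted verbatim from Graham and Kolesnik (Theorem 2.9), so the only benchmark is the proof in that reference, which is indeed the van der Corput $A^{k}B$ induction you outline; your main-term arithmetic is also correct (the pair $A^{k}B(0,1)$ has coordinates $\big(\tfrac{1}{4Q-2},\,1-\tfrac{k+1}{4Q-2}\big)$, and balancing $N^{2}/H$ against the differenced main term does reproduce the stated exponents). The problem is that the two items you defer as ``bookkeeping'' are exactly where the necessary ideas live, and as written both halves of your plan fall short of the clean bound. In the base case, stationary phase (equivalently the second-derivative test) yields $F^{1/2}+NF^{-1/2}$, and for $1\ll F\ll N$ the term $NF^{-1/2}$ exceeds \emph{both} $F^{1/2}$ and $NF^{-1}$; no care with truncation tails or endpoint remainders can repair this, because in that range your ``$\asymp F/N$ admissible frequencies'' number fewer than one: there are no stationary points at all, and the entire sum sits in what you call the remainder. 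The secondary term $F^{-1}N$ comes from a tool that appears nowhere in your sketch, namely the Kusmin--Landau first-derivative test for sums (or, equivalently, the first-derivative bound applied to the zero-frequency Poisson integral): when $CF/N\le\tfrac12$ the derivative $f'$ stays bounded away from every integer, giving $\ll NF^{-1}$ directly, and only in the complementary range $F\gg N$ does one invoke the second-derivative bound, where $NF^{-1/2}\ll F^{1/2}$. This two-range split is the actual content of the case $k=0$.

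The inductive step has a second, subtler gap. Summing the induction hypothesis' secondary terms $N(hF/N)^{-1}$ over $1\le h\le H$ produces $(N^{2}/F)\log H$; multiplied by $N/H$ and evaluated at the balancing choice of $H$, this contributes $\asymp(\text{main term})^{2}\cdot NF^{-1}\log N$, which is not dominated by the claimed bound unless $F\gg N\log N$. Concretely, for $k=1$ and $F\asymp N$ your scheme gives at best $|S|\ll N^{2/3}(\log N)^{1/6}$ against the asserted log-free $N^{2/3}$, and the loss is intrinsic to the triangle-inequality method, since $\sum_{h\le H}\big|\sum_{n}e(g_{h}(n))\big|$ really can be of order $N\log H$ in that range. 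The repair, as in Graham--Kolesnik, is a case decomposition in $F$ that is part of the proof rather than an afterthought: dispose of $F\le cN$ by Kusmin--Landau, of the intermediate range $cN\le F\ll N\log N$ by the second-derivative test alone with no induction (acceptable because it gives $\ll N^{1/2}(\log N)^{1/2}$ and $\tfrac{k+1}{2^{k+2}-2}<\tfrac12$ for $k\ge1$, so the claimed main term is $\gg N^{2/3}$ there), and run the $A$-process induction only for $F\gg N\log N$, where the harmonic-sum logarithm is absorbed since $NF^{-1}\log N\ll1$. Without this decomposition the recursion does not close to the stated estimate, so you should treat it as a required ingredient of the argument, not as routine optimization.
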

\begin{proof}
See (\cite{Graham-Kolesnik}, Theorem 2.9).
\end{proof}

\begin{lemma}\label{Squareoutlemma}
For any complex numbers $a(n)$ we have
\begin{equation*}
\bigg|\sum_{a<n\le b}a(n)\bigg|^2
\leq\bigg(1+\frac{b-a}{Q}\bigg)\sum_{|q|< Q}\bigg(1-\frac{|q|}{Q}\bigg)
\sum_{a<n,\, n+q\leq b}a(n+q)\overline{a(n)},
\end{equation*}
where $Q$ is any positive integer.
\end{lemma}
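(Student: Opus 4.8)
The plan is to derive this from the Cauchy--Schwarz inequality combined with the standard device of averaging over short shifts. First I would extend the coefficients by setting $a(n)=0$ for every integer $n\notin(a,b]$, so that $S:=\sum_{a<n\le b}a(n)$ may be written as a sum over all of $\mathbb{Z}$ without changing its value. The key observation is that for each fixed integer $r$ the reindexing $n\mapsto n+r$ leaves this full sum invariant, and hence, summing over the $Q$ shifts $r=0,1,\dots,Q-1$ and interchanging the order of summation,
\[
Q\,S=\sum_{r=0}^{Q-1}\sum_{n\in\mathbb{Z}}a(n+r)=\sum_{n\in\mathbb{Z}}\sum_{r=0}^{Q-1}a(n+r).
\]

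Next I would control the support of the inner sum. Since $a(n+r)\ne0$ forces $a<n+r\le b$ with $0\le r\le Q-1$, the integers $n$ for which $\sum_{r=0}^{Q-1}a(n+r)$ is nonzero all lie in the interval $(a-Q+1,\,b]$, which contains at most $(b-a)+Q$ of them. Applying Cauchy--Schwarz to the outer sum over this set and inserting this cardinality bound gives
\[
Q^2|S|^2\le\big((b-a)+Q\big)\sum_{n\in\mathbb{Z}}\left|\sum_{r=0}^{Q-1}a(n+r)\right|^2,
\]
and after writing $(b-a)+Q=Q\big(1+(b-a)/Q\big)$ and cancelling one factor of $Q$ this already produces the leading factor of the claimed bound.

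Finally I would expand the inner square as a double sum over $r,s\in\{0,\dots,Q-1\}$, introduce the difference variable $q=r-s$ (so that $|q|\le Q-1$, i.e.\ $|q|<Q$), and note that for each such $q$ there are exactly $Q-|q|$ admissible pairs $(r,s)$; after the reindexing $m=n+s$ each such pair contributes the identical quantity $\sum_{m}a(m+q)\overline{a(m)}$, yielding
\[
\sum_{n\in\mathbb{Z}}\left|\sum_{r=0}^{Q-1}a(n+r)\right|^2=\sum_{|q|<Q}(Q-|q|)\sum_{m}a(m+q)\overline{a(m)},
\]
where the zero-extension restricts the inner sum to $a<m,\,m+q\le b$. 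Substituting this into the previous inequality and dividing by $Q$ gives the stated result. The argument is essentially routine; the only points requiring genuine care are the exact cardinality estimate $(b-a)+Q$ for the support of the shifted sum (which yields the factor $1+(b-a)/Q$) and the multiplicity count that attaches weight $Q-|q|$ to each difference $q$ (which yields the weight $1-|q|/Q$). I expect no real obstacle beyond verifying that this index bookkeeping reproduces the stated constants exactly rather than merely up to an absolute factor.
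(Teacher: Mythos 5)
Your proof is correct, and all the delicate bookkeeping (the support bound $(b-a)+Q$ giving the factor $1+\frac{b-a}{Q}$, and the multiplicity count $Q-|q|$ giving the weight $1-\frac{|q|}{Q}$) checks out exactly. The paper offers no proof of this lemma at all --- it simply cites Iwaniec--Kowalski, Lemma 8.17 --- and your shift-averaging plus Cauchy--Schwarz argument is precisely the standard proof of that cited result (the Weyl--van der Corput inequality), so you have in effect reconstructed the reference's own argument.
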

\begin{proof}
See (\cite{Iwaniec-Kowalski}, Lemma 8.17).
\end{proof}

\begin{lemma}\label{Iest}
Assume that $F(x)$, $G(x)$ are real functions defined in  $[a,b]$,
$|G(x)|\leq H$ for $a\leq x\leq b$ and $G(x)/F'(x)$ is a monotonous function. Set
\begin{equation*}
I=\int\limits_{a}^{b}G(x)e(F(x))dx\,.
\end{equation*}
If $F'(x)\geq h>0$ for all $x\in[a,b]$ or if $F'(x)\leq-h<0$ for all $x\in[a,b]$ then
\begin{equation*}
|I|\ll H/h\,.
\end{equation*}
If $F''(x)\geq h>0$ for all $x\in[a,b]$ or if $F''(x)\leq-h<0$ for all $x\in[a,b]$ then
\begin{equation*}
|I|\ll H/\sqrt h\,.
\end{equation*}
\end{lemma}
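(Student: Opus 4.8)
The statement is the classical first and second derivative tests for oscillatory integrals, and the plan is to prove it by the standard van der Corput argument, using only the second mean value theorem for integrals. Throughout I set $g(x)=G(x)/F'(x)$, which by hypothesis is real and monotone, and which satisfies $|g(x)|\le H/h$ on any subinterval where $|F'(x)|\ge h$. For the first assertion I would begin from the identity
\begin{equation*}
G(x)e(F(x))=g(x)F'(x)e(F(x))=\frac{1}{2\pi i}\,g(x)\frac{d}{dx}e(F(x)),
\end{equation*}
which, on separating real and imaginary parts, gives
\begin{equation*}
\mathrm{Re}\,I=\frac{1}{2\pi}\int\limits_a^b g(x)\,d\big(\sin2\pi F(x)\big),\qquad
\mathrm{Im}\,I=-\frac{1}{2\pi}\int\limits_a^b g(x)\,d\big(\cos2\pi F(x)\big).
\end{equation*}
Since $g$ is monotone, the second mean value theorem produces a point $\xi\in[a,b]$ with
\begin{equation*}
\int\limits_a^b g\,d(\sin2\pi F)=g(a)\big[\sin2\pi F(\xi)-\sin2\pi F(a)\big]+g(b)\big[\sin2\pi F(b)-\sin2\pi F(\xi)\big],
\end{equation*}
whose modulus is at most $2|g(a)|+2|g(b)|\le 4H/h$; the cosine integral is bounded in the same way. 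Hence $|I|\ll H/h$, which is the first derivative test.

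For the second assertion I would assume without loss of generality that $F''(x)\ge h>0$, so that $F'$ is strictly increasing. The finiteness of the monotone function $g=G/F'$ forces $F'$ to keep a constant sign—it cannot vanish in the interior, where $g$ would be unbounded—so I may take $F'(x)>0$ on $[a,b]$. I would then split $[a,b]$ at the point $x_0$ where $F'(x_0)=\sqrt h$ (if $F'(a)\ge\sqrt h$ no split is needed, and if $F'(b)\le\sqrt h$ only the first piece occurs). On $[a,x_0]$ one has $0<F'\le\sqrt h$, and since $F'$ increases at rate at least $h$ this interval has length at most $\sqrt h/h=h^{-1/2}$; the trivial bound therefore contributes at most $H\,h^{-1/2}$. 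On $[x_0,b]$ one has $F'>\sqrt h$, and applying the first derivative test just established—legitimate because $g$ remains monotone on this subinterval—contributes $\ll H/\sqrt h$. Summing the two pieces gives $|I|\ll H/\sqrt h$. The remaining sign cases ($F''\le-h$, or $F'<0$) reduce to this one on replacing $F$ by $-F$ and, if needed, reversing the orientation of $[a,b]$, operations that leave $|I|$ unchanged.

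The genuinely delicate points are few: one must invoke the second mean value theorem for a complex integrand, which I handle by separating it into the real monotone weight $g$ integrated against $\sin2\pi F$ and $\cos2\pi F$; and one must note that the hypothesis ``$G/F'$ monotone'' already rules out a sign change of $F'$, which is precisely what makes the split in the second derivative test legitimate. The exponent $\tfrac12$ emerges from balancing the two contributions $H\,h^{-1/2}$ and $H/\sqrt h$ at the cut-off $F'=\sqrt h$. I expect no real analytic obstacle, since the result is classical within van der Corput's method (cf.\ \cite{Graham-Kolesnik}); the main work is bookkeeping the constants and the boundary cases of the split.
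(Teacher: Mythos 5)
Your proof is correct. Note first that the paper contains no proof of this lemma at all: it is quoted verbatim from Titchmarsh (\emph{The Theory of the Riemann Zeta-function}, p.~71, the classical first- and second-derivative tests), and what you have written is in substance the argument given at that reference --- the second mean value theorem applied to the monotone weight $g=G/F'$ against $d(\sin 2\pi F)$ and $d(\cos 2\pi F)$ for the first assertion, and for the second a split of $[a,b]$ at the level $|F'|=\sqrt h$, balancing the trivial bound $H\,h^{-1/2}$ on the short piece (of length $\le h^{-1/2}$ since $F''\ge h$) against the bound $H/\sqrt h$ from the first-derivative test on the rest. In the present paper only the first-derivative bound is actually invoked (in the estimate of $\Theta_\tau-\Theta$, where $F'(y)=\alpha\big(y^c\tan^\theta(\log y)\big)'$ is bounded away from zero), so your proof fully covers the lemma as it is used.

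One point deserves sharpening, though it does not invalidate the argument. In the second-derivative case you reduce to $F'>0$ by asserting that $g=G/F'$ ``would be unbounded'' at an interior zero of $F'$; that is not quite right, since $G$ could vanish at the same point, in which case $g$ stays bounded nearby but is simply \emph{undefined} there. The correct justification is that the hypothesis that $G/F'$ is a function on all of $[a,b]$ forces $F'\neq 0$ throughout, hence (as $F'$ is continuous, $F''$ existing) of constant sign --- legitimate under the literal reading of the statement. Titchmarsh's own proof avoids this discussion entirely by splitting $[a,b]$ into the three pieces $\{F'\le-\sqrt h\}$, $\{|F'|<\sqrt h\}$ and $\{F'\ge\sqrt h\}$, which costs nothing and also covers the customary formulation in which $F'$ is allowed a stationary point (interpreting the monotonicity of $G/F'$ where defined); if you want your write-up robust against that reading, adopt the three-piece split. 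A final minor remark: your use of the second mean value theorem needs $F'$ integrable (say continuous), an implicit regularity assumption which the lemma statement, like its source, glosses over.
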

\begin{proof}
See (\cite{Titchmarsh}, p. 71).
\end{proof}

\begin{lemma}\label{Heath-Brown} Let $G(n)$ be a complex valued function.
Assume further that
\begin{align*}
&P>2\,,\quad P_1\le 2P\,,\quad  2\le U<V\le Z\le P\,,\\
&U^2\le Z\,,\quad 128UZ^2\le P_1\,,\quad 2^{18}P_1\le V^3\,.
\end{align*}
Then the sum
\begin{equation*}
\sum\limits_{P<n\le P_1}\Lambda(n)G(n)
\end{equation*}
can be decomposed into $O\Big(\log^6P\Big)$ sums, each of which is either of Type I
\begin{equation*}
\mathop{\sum\limits_{M<m\le M_1}a_m\sum\limits_{L<l\le L_1}}_{P<ml\le P_1}G(ml)
\end{equation*}
and
\begin{equation*}
\mathop{\sum\limits_{M<m\le M_1}a_m\sum\limits_{L<l\le L_1}}_{P<ml\le P_1}G(ml)\log l\,,
\end{equation*}
where
\begin{equation*}
L\ge Z\,,\quad M_1\le 2M\,,\quad L_1\le 2L\,,\quad a_m\ll \tau _5(m)\log P
\end{equation*}
or of Type II
\begin{equation*}
\mathop{\sum\limits_{M<m\le M_1}a_m\sum\limits_{L<l\le L_1}}_{P<ml\le P_1}b_lG(ml)
\end{equation*}
where
\begin{equation*}
U\le L\le V\,,\quad M_1\le 2M\,,\quad L_1\le 2L\,,\quad
a_m\ll \tau _5(m)\log P\,,\quad b_l\ll \tau _5(l)\log P\,.
\end{equation*}
\end{lemma}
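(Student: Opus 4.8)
The plan is to deduce the decomposition from Heath-Brown's identity, followed by a dyadic splitting of the resulting variables and a combinatorial regrouping of them into two blocks; the only genuinely delicate point is the regrouping, which is exactly where the three size conditions on $U,V,Z,P_1$ enter.

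First I would apply Heath-Brown's identity with $k=3$. Taking $z=(2P)^{1/3}$, so that $z^{3}=2P\ge P_1\ge n$ throughout the range $P<n\le P_1$, the identity reads
\begin{equation*}
\Lambda(n)=\sum_{j=1}^{3}(-1)^{j-1}\binom{3}{j}\sum_{\substack{m_1\cdots m_j\,n_1\cdots n_j=n\\ m_1,\ldots,m_j\le z}}\mu(m_1)\cdots\mu(m_j)\log n_1\,.
\end{equation*}
Inserting this into $\sum_{P<n\le P_1}\Lambda(n)G(n)$ yields $O(1)$ multilinear sums, the $j$-th of them carrying $2j\le 6$ summation variables, with the $\mu$-variables confined to $(0,z]$ and a single weight $\log n_1$ attached to one variable. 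The term $j=3$, which has six variables, is the one that will control the final count and dictate the divisor bound $\tau_5$.

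Next I would dyadically decompose each variable, writing it as ranging over an interval $(T,2T]$. Since all the variables have product $\asymp P_1\le 2P$, only $O(\log^6 P)$ admissible combinations of dyadic sizes occur, which already produces the asserted number of pieces. Inside a fixed box every variable has a prescribed size, so the whole problem collapses to choosing, for each box, a partition of the variables into two blocks whose products will serve as $m$ and $l$.

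The heart of the matter, and the step I expect to be the main obstacle, is the \emph{dichotomy} that assigns each box a Type I or Type II shape. If some non-Möbius variable has dyadic size $\ge Z$, I would take $l$ to be that variable alone --- it carries no arithmetic coefficient, at worst the factor $\log n_1=\log l$ --- and let $m$ be the product of the remaining $\le 5$ variables; since $m$ is then an ordered product of at most five integers weighted by $|\mu|\le 1$ and at most one $\log\le\log P$, one gets $a_m\ll\tau_5(m)\log P$ and a Type I sum. In the opposite case no single non-Möbius variable reaches $Z$, and I would build $l$ greedily, ordering the variables by size and accumulating their product until it first exceeds $U$. Here the three hypotheses are used in concert: $128UZ^2\le P_1$ guarantees that the variables exceeding the threshold $V/U$ together carry enough multiplicative mass for such a partial product to exist, while $U^2\le Z$ and $2^{18}P_1\le V^3$ bound the size of the individual factors relative to the window, so that the running product cannot jump clear over $[U,V]$ in a single step. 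This places the partial product in $[U,V]$, giving a Type II sum with $m$ and $l$ again products of at most five factors, whence $a_m,b_l\ll\tau_5(\cdot)\log P$. The substantive work is to verify that these two cases are jointly exhaustive for \emph{every} admissible box --- that whenever the Type I alternative fails the greedy construction is forced into the interval $[U,V]$ --- which is the precise role of the calibrated constants; the coefficient bounds and the handling of the $\log n_1$ weight are then routine. Summing over the $O(\log^6 P)$ boxes completes the decomposition.
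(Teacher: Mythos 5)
Your outline reproduces the architecture of the source this paper actually cites for the lemma (the paper gives no proof of its own, only ``See [Heath-Brown]''): Heath-Brown's identity with $k=3$ and $z=(2P)^{1/3}$, a dyadic decomposition giving the $O(\log^6P)$ pieces, the $\tau_5$-bound on coefficients built from at most five $\mu$-weighted factors plus one logarithm, and a Type I/Type II dichotomy governed by the sizes of the dyadic blocks. All of that is correct and is the standard route. The problem is the one step you yourself identify as the heart of the matter, and which your sketch does not actually carry out: the exhaustiveness of the dichotomy. Your stated mechanism for it is false. You claim that $U^2\le Z$ and $2^{18}P_1\le V^3$ ensure ``the running product cannot jump clear over $[U,V]$ in a single step,'' i.e.\ in effect that no admissible factor exceeds $V/U$. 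But in the non--Type-I case the smooth variables may have size anywhere up to $Z$, and the hypotheses permit $Z>V/U$; indeed in this very paper's application (Lemma \ref{SalphaXest}, with $U=2^{-11}X^{1/27}$, $V=2^7X^{1/3}$, $Z=X^{13/27}$) one has $V/U=2^{18}X^{8/27}$, which is far smaller than $Z=X^{13/27}$. So a greedy accumulation with running product just below $U$ can absorb a single variable of size close to $Z$ and land well beyond $V$: the construction as you describe it breaks in exactly the regime the lemma is used.

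The missing content is the case analysis that makes the dichotomy work. First, a variable whose dyadic size already lies in $[U,V]$ can serve as the $l$-block by itself (any bounded coefficient, including $\mu$ or a $\log$, is admissible in Type II). Second, the M\"obius variables satisfy $m_i\le(2P)^{1/3}\le 2^{-5}V$ by $2^{18}P_1\le V^3$, so every variable of size exceeding $V$ is a smooth one of size below $Z$, and there are at most two such variables, since three would force the total product past $V^3\ge 2^{18}P_1$. Third --- and this is the actual role of $128UZ^2\le P_1$, which you instead attribute to a vague ``multiplicative mass'' of variables above $V/U$ --- after removing those at most two large variables, the product of the remaining variables, now each of size below $U$, still exceeds $64U$, so the greedy product over these small factors alone crosses $U$ with a last factor smaller than $U$ and lands in $[U,U^2)$; one then checks $U^2$ against the window (immediate in this paper's application, where $U^2=2^{-22}X^{2/27}\ll V$). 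Without this three-way analysis the two cases are not jointly exhaustive, and since your proposal explicitly defers this verification rather than performing it, the attempt is incomplete precisely at the lemma's only nontrivial point.
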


\begin{proof}
See (\cite{Heath}).
\end{proof}

\begin{lemma}\label{SIlemma}
If $|\alpha|\leq\tau$ then
\begin{equation*}
S(\alpha)=I(\alpha)+\mathcal{O}\Big(Xe^{-(\log X)^{1/5}}\Big)\,.
\end{equation*}
\end{lemma}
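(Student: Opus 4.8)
The plan is to pass from the sum over primes to a sum over $\Lambda(n)$, to compare the latter with $I(\alpha)$ by Abel summation, and then to bound the remaining integral by the prime number theorem for small $|\alpha|$ and by exponential--sum estimates for the rest of the range. First I would replace $S(\alpha)$ from \eqref{Salpha} by $\sum_{\Delta_1<n\le\Delta_2}\Lambda(n)e(\alpha\phi(n))$, where $\phi(t)=t^c\tan^\theta(\log t)$; the prime-power terms contribute $\mathcal{O}(\sqrt{X}\log^2 X)$ and are negligible. The choice of $\Delta_1,\Delta_2$ in \eqref{Delta1}, \eqref{Delta2} is made precisely so that $\tan(\log t)\in[4/9,2]$ on $[\Delta_1,\Delta_2]$; hence $\phi$ is smooth, positive and increasing there, with $\phi^{(r)}(t)\asymp t^{c-r}$ and $\Delta_1,\Delta_2\asymp X$. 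Writing $\sum_{n\le t}\Lambda(n)=t+E(t)$, Abel summation gives
\begin{align*}
\sum_{\Delta_1<n\le\Delta_2}\Lambda(n)e(\alpha\phi(n))
&=I(\alpha)+\mathcal{O}\Big(\max_{t\le\Delta_2}|E(t)|\Big)\\
&\quad-2\pi i\alpha\int_{\Delta_1}^{\Delta_2}E(t)\phi'(t)e(\alpha\phi(t))\,dt,
\end{align*}
so everything reduces to the last integral. A naive appeal to the prime number theorem is too wasteful here, since $|\alpha|\phi'(t)$ is not small enough over $|\alpha|\le\tau$; one must exploit the oscillation of $E(t)$.

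Second, for $|\alpha|\le\alpha_0$ with $\alpha_0=X^{-c}\exp\big((\log X)^{2/5}\big)$ I would insert the truncated explicit formula $E(t)=-\sum_{|\gamma|\le T}t^\rho/\rho+\mathcal{O}\big(t\log^2(tT)/T\big)$ with $T=\exp\big(2(\log X)^{2/5}\big)$, where $\rho=\beta+i\gamma$ runs over the nontrivial zeros of $\zeta$. The error term contributes $\ll X^{c+1}|\alpha|T^{-1}\log^2 X\ll X\exp(-(\log X)^{1/5})$, and after one integration by parts each zero yields an oscillatory integral $\int t^{\rho-1}e(\alpha\phi(t))\,dt$ with phase $F(t)=\alpha\phi(t)+\tfrac{\gamma}{2\pi}\log t$. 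Since $F'(t)=\alpha\phi'(t)+\gamma/(2\pi t)$, a stationary point occurs only for the $O(|\alpha|X^c)$ zeros with $|\gamma|\asymp|\alpha|X^c$; for all other zeros $|F'(t)|\gg|\gamma|/X$ and Lemma \ref{Iest} (first-derivative test) applies, while for the stationary zeros I would use the second-derivative bound of Lemma \ref{Iest} with $|F''|\asymp|\alpha|X^{c-2}$. Summing with $N(T)\ll T\log T$ and the Vinogradov--Korobov zero-free region $\beta\le 1-c_0(\log(|\gamma|+2))^{-2/3-\eta}$, which at the relevant heights $|\gamma|\le\exp\big((\log X)^{2/5}\big)$ gives $X^{\beta-1}\le\exp(-c_1(\log X)^{11/15})$, the whole contribution is $\ll X\exp(-(\log X)^{1/5})$.

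Third, for $\alpha_0<|\alpha|\le\tau$ the main term $I(\alpha)$ from \eqref{Int} is itself negligible: by Lemma \ref{Iest}, $|I(\alpha)|\ll\big(|\alpha|\phi'(t)\big)^{-1}\ll X^{1-c}/|\alpha|\ll X\exp(-(\log X)^{1/5})$. It then suffices to show $|S(\alpha)|\ll X\exp(-(\log X)^{1/5})$; since the phase now oscillates $|\alpha|X^c\gg\exp\big((\log X)^{2/5}\big)$ times, I would decompose $\sum\Lambda(n)e(\alpha\phi(n))$ by Lemma \ref{Heath-Brown} into $O(\log^6 X)$ Type I and Type II sums and estimate each by van der Corput's method, using Lemma \ref{Squareoutlemma} to linearise the Type II sums and Lemma \ref{GrahamandKolesnik} (or Lemma \ref{Iest} on the inner integrals) together with $\phi^{(r)}(t)\asymp t^{c-r}$. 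This yields a bound that is $\ll X\exp(-(\log X)^{1/5})$ throughout the range, so that $S(\alpha)=I(\alpha)+\mathcal{O}(X\exp(-(\log X)^{1/5}))$ in both regimes and the lemma follows.

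I expect the main obstacle to be the uniform treatment across the transition at $|\alpha|\asymp\alpha_0$. The explicit-formula estimate deteriorates as $|\alpha|$ grows, through the stationary zeros, whose height $|\gamma|\asymp|\alpha|X^c$ is so low that only the zero-free region — not a power saving — controls $X^\beta$; conversely the van der Corput estimate deteriorates as $|\alpha|$ shrinks and the number of oscillations falls. The genuinely delicate step is bounding the stationary zeros of the second paragraph, where the second-derivative estimate of Lemma \ref{Iest} must be balanced against the Vinogradov--Korobov region at these heights, and it is precisely this balance that pins the admissible saving at $\exp(-(\log X)^{1/5})$.
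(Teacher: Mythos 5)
Your outline is essentially correct, but it is a genuinely different proof from the one the paper relies on. The paper gives no argument at all: it simply invokes Tolev's Lemma 14 with the polynomial truncation height $T=X^{1/5}$. That means the truncated explicit formula is applied uniformly over the whole range $|\alpha|\le\tau$, and a polynomial $T$ is forced there, because the truncation error $\ll|\alpha|X^{c+1}T^{-1}\log^2X$ must beat $|\alpha|X^c$, which can be as large as $X^{1/9}$. But then the stationary zeros with $|\gamma|\asymp|\alpha|X^c$ sit at heights up to $X^{1/9}$, where no zero-free region gives a power saving, while their number $\gg|\alpha|X^c\,$ overwhelms the saving $(|\alpha|X^c)^{-1/2}$ of the second-derivative test; any proof along these lines therefore needs a zero-density theorem (Ingham/Huxley type, $N(\sigma,T)\ll T^{12(1-\sigma)/5}\log^CT$) combined with the Vinogradov--Korobov region, which is exactly the machinery behind Tolev's lemma: since $T^{12/5}\ll X^{1-\delta}$, the density estimate replaces the raw zero count by $(T^{12/5}/X)^{1-\sigma}$ and the region then yields a saving $\exp(-c(\log X)^{1/3-\eta})$. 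Your proof avoids density theorems entirely by splitting at $\alpha_0=X^{-c}\exp((\log X)^{2/5})$: below $\alpha_0$ the truncation may be taken subpolynomial, so the crude count $N(T)\ll T\log T$ plus the zero-free region suffices (your comparison of $\exp(O((\log X)^{2/5}))$ against $\exp(-c_1(\log X)^{11/15})$ is correct); above $\alpha_0$ you abandon the comparison of $S$ with $I$, observe that $I(\alpha)\ll X^{1-c}/|\alpha|$ is individually negligible by Lemma \ref{Iest}, and bound $S(\alpha)$ by re-running the paper's minor-arc machinery (Lemma \ref{Heath-Brown}, Lemma \ref{Squareoutlemma}, Lemma \ref{GrahamandKolesnik}) at smaller $|\alpha|$. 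This is more self-contained, at the cost that Lemmas \ref{SIest}--\ref{SalphaXest} are stated only for $|\alpha|\ge\tau$ and must be re-proved on $(\alpha_0,\tau]$.

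In that re-proof there is one point you must not copy verbatim, and it is the only real gap in your sketch. Lemmas \ref{SIest} and \ref{SIIest} absorb the coefficient bounds $a_m\ll\tau_5(m)\log X$, $b_l\ll\tau_5(l)\log X$ pointwise into a factor $X^\eta$. That is harmless there because the final saving is a fixed power of $X$, but it is fatal in your middle range: the binding terms are the $F^{-1}N$ terms of Lemma \ref{GrahamandKolesnik}, which save only $u^{-1}$ with $u=|\alpha|X^c\ge\exp((\log X)^{2/5})$, and $X^{1+\eta}/u$ is not even $o(X)$. You must keep all losses polylogarithmic, using the mean values $\sum_{m\le M}\tau_5(m)\ll M\log^4M$ and $\sum_{m\le M}\tau_5^2(m)\ll M\log^{C}M$, together with Cauchy--Schwarz in $l$ for the off-diagonal Type II terms. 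With that correction the Type I bound becomes $\ll\log^CX\,\big(Mu^{1/2}+X/u\big)\ll X^{31/54+\eta}+X\log^CX\exp(-(\log X)^{2/5})$, and the Type II bound, taking the shift $Q\asymp L$ in Lemma \ref{Squareoutlemma}, becomes $\ll X\log^CX\big(X^{-1/54}+u^{-1/2}\big)$; both are acceptable. A last, cosmetic point: your closing claim that the stationary-zero versus zero-free-region balance ``pins the saving at $\exp(-(\log X)^{1/5})$'' describes the uniform (Tolev-style) argument, not yours; in your split both regimes deliver $\exp(-c(\log X)^{2/5})$ or better, and the exponent $1/5$ in the statement is merely a convenient target.
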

\begin{proof}
This lemma is very similar to result of Tolev \cite{Tolev1}.
Inspecting the arguments presented in (\cite{Tolev1}, Lemma 14), (with $T=X^{\frac{1}{5}}$)
the reader will easily see that the proof of Lemma \ref{SIlemma} can be obtained by the same way.
\end{proof}

\begin{lemma}\label{Thetaest}
We have
\begin{equation*}
\int\limits_{-\infty}^{\infty}I^3(\alpha)\Psi(\alpha)e(-N\alpha)\,d\alpha
\gg\varepsilon X^{3-c}\,.
\end{equation*}
\end{lemma}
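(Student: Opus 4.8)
The plan is to expand the cube and apply Fourier inversion. Writing $F(y_1,y_2,y_3)=y_1^c\tan^\theta(\log y_1)+y_2^c\tan^\theta(\log y_2)+y_3^c\tan^\theta(\log y_3)$, I would expand $I^3(\alpha)$ as a triple integral over $(\Delta_1,\Delta_2]^3$ and interchange the order of integration (justified by absolute convergence, since $\psi$ is compactly supported so $\Psi$ is integrable by Lemma \ref{Fourier}). This recasts the quantity as
\[
\int_{\Delta_1}^{\Delta_2}\!\int_{\Delta_1}^{\Delta_2}\!\int_{\Delta_1}^{\Delta_2}\left(\int_{-\infty}^{\infty}\Psi(\alpha)e\big(\alpha(F-N)\big)\,d\alpha\right)dy_1\,dy_2\,dy_3.
\]
By the Fourier inversion formula applied to the pair $(\psi,\Psi)$, the inner integral equals $\psi(F-N)$. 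Since $\psi\ge 0$ everywhere and $\psi\equiv 1$ on $[-3\varepsilon/4,3\varepsilon/4]$, the whole expression is bounded below by the volume
\[
V=\operatorname{meas}\big\{(y_1,y_2,y_3)\in(\Delta_1,\Delta_2]^3:\ |F(y_1,y_2,y_3)-N|\le 3\varepsilon/4\big\},
\]
so it suffices to prove $V\gg\varepsilon X^{3-c}$.

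For the volume estimate the key structural facts are the following. For $y\in(\Delta_1,\Delta_2]$ one has $\tan(\log y)\in(4/9,2]$, because $\pi\big[\frac{\log X}{\pi}\big]$ is an integer multiple of $\pi$ and $\tan$ has period $\pi$, so the trigonometric factors stay bounded away from their poles. Writing $g(y)=y^c\tan^\theta(\log y)$, differentiation gives
\[
g'(y)=y^{c-1}\Big(c\tan^\theta(\log y)+\theta\tan^{\theta-1}(\log y)\sec^2(\log y)\Big)\asymp X^{c-1}
\]
uniformly on the interval, since $y\asymp e^{\pi[\log X/\pi]}\asymp X$ and the bracket is $\asymp 1$. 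Moreover the central point $y_0$ with $\tan(\log y_0)=3/2$ satisfies $g(y_0)=N/3$ by the defining equation \eqref{XN}, so that $F(y_0,y_0,y_0)=N$ exactly and $y_0$ lies strictly inside $(\Delta_1,\Delta_2)$.

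The conclusion then follows by a Fubini slicing argument. Fixing $(y_2,y_3)$ in a box of side $\asymp X$ centred at $(y_0,y_0)$ — a set of measure $\gg X^2$ — the continuity of $g$ keeps the target value $N-g(y_2)-g(y_3)$ strictly inside the range of $g$ on $(\Delta_1,\Delta_2]$. Since $g$ is strictly increasing with $g'\asymp X^{c-1}$, the set of $y_1$ solving $|F-N|\le 3\varepsilon/4$ is a subinterval of $(\Delta_1,\Delta_2]$ of length $\gg \varepsilon/X^{c-1}=\varepsilon X^{1-c}$. Integrating over the $(y_2,y_3)$-box yields $V\gg X^2\cdot\varepsilon X^{1-c}=\varepsilon X^{3-c}$, as required.

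The main obstacle is the bookkeeping in this final step: one must choose the sub-box for $(y_2,y_3)$ with a genuine constant margin, so that the resulting $y_1$-interval is fully contained in $(\Delta_1,\Delta_2]$ and attains the full length $\gg\varepsilon X^{1-c}$, and one must verify that the bracket in $g'(y)$ is bounded above and below by positive constants uniformly, using $\tan(\log y)\in(4/9,2]$. Everything else — Fubini, Fourier inversion, and the positivity of $\psi$ — is routine.
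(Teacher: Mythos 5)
Your proposal is correct and follows essentially the same route as the paper's own proof: Fourier inversion reduces the integral to the volume of the region where $|y_1^c\tan^\theta(\log y_1)+y_2^c\tan^\theta(\log y_2)+y_3^c\tan^\theta(\log y_3)-N|\le 3\varepsilon/4$, two variables are then confined to a sub-box of measure $\gg X^2$, and the monotonicity of $g(y)=y^c\tan^\theta(\log y)$ with $g'\asymp X^{c-1}$ (exactly the paper's \eqref{Firstderivative}, via the normalization \eqref{XN}) produces a solution interval for the remaining variable of length $\gg\varepsilon X^{1-c}$ inside $(\Delta_1,\Delta_2]$. The only difference is cosmetic: you centre the sub-box at the diagonal point $y_0$ with $g(y_0)=N/3$, whereas the paper takes an explicit off-centre box $[\Delta_\lambda,\Delta_\mu]^2$ determined by the inequalities \eqref{lambdamu}; both choices secure the same containment margin, and you correctly flag that margin as the one point needing care.
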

\begin{proof}
Denoting the above integral with $\Theta$, using \eqref{Int}, the definition of $\psi(y)$
and the inverse Fourier transformation formula we obtain
\begin{align}\label{Theta}
\Theta&=\int\limits_{\Delta_1}^{\Delta_2}\int\limits_{\Delta_1}^{\Delta_2}\int\limits_{\Delta_1}^{\Delta_2}\int\limits_{-\infty}^{\infty}
\Psi(\alpha)e\big(\big(y_1^c\tan^\theta(\log y_1)+y_2^c\tan^\theta(\log y_2)+y_3^c\tan^\theta(\log y_3)-N\big)\alpha\big)\,d\alpha \,dy_1\,dy_2\,dy_3\nonumber\\
&=\int\limits_{\Delta_1}^{\Delta_2}\int\limits_{\Delta_1}^{\Delta_2}\int\limits_{\Delta_1}^{\Delta_2}
\psi\big(y_1^c\tan^\theta(\log y_1)+y_2^c\tan^\theta(\log y_2)+y_3^c\tan^\theta(\log y_3)-N\big)\,dy_1\,dy_2\,dy_3\nonumber\\
&\geq\mathop{\int\limits_{\Delta_1}^{\Delta_2}\int\limits_{\Delta_1}^{\Delta_2}\int\limits_{\Delta_1}^{\Delta_2}}
_{\substack{|y_1^c\tan^\theta(\log y_1)+y_2^c\tan^\theta(\log y_2)+y_3^c\tan^\theta(\log y_3)-N|<3\varepsilon/4}}\,dy_1\,dy_2\,dy_3\nonumber\\
&\geq\int\limits_{\Delta_\lambda}^{\Delta_\mu}\int\limits_{\Delta_\lambda}^{\Delta_\mu}
\left(\int\limits_{\Omega}\,dy_3\right)\,dy_1\,dy_2\,,
\end{align}
where $\lambda$ and $\mu$ are real numbers such that
\begin{equation}\label{lambdamu}
\frac{4}{9}<\frac{3}{2}<\left(\frac{2}{5}\left(\frac{3^{\theta+1}}{2^\theta}-\frac{3}{4}\right)\right)^{\frac{1}{\theta}}
<\lambda<\mu<\left(\frac{2}{5}\left(\frac{3^{\theta+1}}{2^\theta}-\frac{2}{3}\right)\right)^{\frac{1}{\theta}}<2\,,
\end{equation}
\begin{align}
\label{Deltalambda}
&\Delta_\lambda=e^{\pi\big[\frac{\log X}{\pi}\big]+\arctan\lambda}\,,\\
\label{Deltamu}
&\Delta_\mu=e^{\pi\big[\frac{\log X}{\pi}\big]+\arctan\mu}
\end{align}
and
\begin{equation}\label{Omega1}
\Omega=[\Delta_1, \Delta_2]\cap\big[y'_3, y''_3\big]\,,
\end{equation}
where the interval $\big[y'_3, y''_3\big]$ is a solution of the system inequalities
\begin{equation}\label{System1}
\left|\begin{array}{cc}
y_3^c\tan^\theta(\log y_3)>N-3\varepsilon/4-y_1^c\tan^\theta(\log y_1)-y_2^c\tan^\theta(\log y_2)\\
y_3^c\tan^\theta(\log y_3)<N+3\varepsilon/4-y_1^c\tan^\theta(\log y_1)-y_2^c\tan^\theta(\log y_2\;\;
\end{array}\right..
\end{equation}
From \eqref{XN}, \eqref{varepsilon}, \eqref{Delta1},  \eqref{Delta2}, \eqref{lambdamu},
\eqref{Deltalambda}, \eqref{Deltamu} and \eqref{System1}
it follows that
\begin{equation}\label{subsetDelta}
\big[y'_3, y''_3\big]\subset[\Delta_1, \Delta_2]\,.
\end{equation}
Now \eqref{Omega1} and \eqref{subsetDelta} yield
\begin{equation}\label{Omega2}
\Omega=\big[y'_3, y''_3\big]\,.
\end{equation}
Consider the function $t(y)$ defined by
\begin{equation}\label{Implicitfunction}
t=y^c\tan^\theta(\log y)\,,
\end{equation}
for
\begin{equation}\label{yy'y''}
y\in\big[y'_3, y''_3\big]\,.
\end{equation}
The first derivative of $y$ as implicit function of $t$  is
\begin{equation}\label{Firstderivative}
y'=\frac{y^{1-c}}{\big(c\tan(\log y)+\theta\sec^2(\log y)\big)\tan^{\theta-1}(\log y)}\,.
\end{equation}
By  \eqref{Delta1},  \eqref{Delta2}, \eqref{subsetDelta}, \eqref{yy'y''} and \eqref{Firstderivative} we deduce
\begin{equation}\label{y'est1}
y'\asymp X^{1-c}\,.
\end{equation}
Using \eqref{Theta}, \eqref{System1}, \eqref{Omega2}, \eqref{Implicitfunction} and the mean-value theorem we get
\begin{equation}\label{Thetaest1}
\Theta\gg\varepsilon\int\limits_{\Delta_\lambda}^{\Delta_\mu}\int\limits_{\Delta_\lambda}^{\Delta_\mu}
y'\big(\xi_{y_1,y_2}\big)\,dy_1\,dy_2\,,
\end{equation}
where
\begin{equation*}
\xi_{y_1,y_2}\asymp X^c\,.
\end{equation*}
Bearing in mind \eqref{Deltalambda}, \eqref{Deltamu}, \eqref{y'est1} and \eqref{Thetaest1} we obtain
\begin{equation*}
\Theta\gg\varepsilon X^{3-c}\,.
\end{equation*}
The lemma is proved.
\end{proof}

\begin{lemma}\label{3Int}
We have
\begin{align*}
&\emph{(i)}\quad\quad\quad\quad\int\limits_{-\tau}^\tau|S(\alpha)|^2\,d\alpha\ll X^{2-c}\log^3X\,,
\quad\quad\quad\quad\quad\quad\quad\quad\quad\quad\quad\quad\quad\quad\quad\quad\\
&\emph{(ii)}\quad\quad\quad\quad\int\limits_{-\tau}^\tau|I(\alpha)|^2\,d\alpha\ll X^{2-c}\log X\,,
\quad\quad\quad\quad\quad\quad\quad\quad\quad\quad\quad\quad\quad\quad\quad\quad\\
&\emph{(iii)}\quad\quad\quad\quad\int\limits_{n}^{n+1}|S(\alpha)|^2\,d\alpha\ll X\log^3X\,.
\quad\quad\quad\quad\quad\quad\quad\quad\quad\quad\quad\quad\quad\quad\quad\quad
\end{align*}
\end{lemma}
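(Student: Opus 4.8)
All three bounds are second--moment estimates, and the plan is to open each square, integrate in $\alpha$ first, and reduce to a weighted count of pairs. Write $f(y)=y^c\tan^\theta(\log y)$ for brevity. The one structural fact I need is that $f$ is monotone with $f'(y)\asymp X^{c-1}$ uniformly on $[\Delta_1,\Delta_2]$: by \eqref{Delta1} and \eqref{Delta2} the argument $\log y$ ranges over a single period of the tangent on which $\tan(\log y)\in[4/9,2]$ stays bounded away from the poles, so the derivative computed in \eqref{Firstderivative} gives $y'\asymp X^{1-c}$, exactly as in \eqref{y'est1}, throughout the whole interval.

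For part (i) I would expand, using \eqref{Salpha},
\[
\int_{-\tau}^{\tau}|S(\alpha)|^2\,d\alpha=\sum_{\Delta_1<p_1,p_2\le\Delta_2}(\log p_1)(\log p_2)\int_{-\tau}^{\tau}e\big(\alpha(f(p_1)-f(p_2))\big)\,d\alpha,
\]
and bound the inner integral by $\min\!\big(2\tau,(\pi|f(p_1)-f(p_2)|)^{-1}\big)$. The diagonal $p_1=p_2$ contributes $\ll\tau\sum_{\Delta_1<p\le\Delta_2}\log^2p\ll\tau X\log X=X^{10/9-c}\log X$, which is well within the claim. For the off-diagonal the mean value theorem gives $f(p_1)-f(p_2)=f'(\xi)(p_1-p_2)$ with $\xi\in[\Delta_1,\Delta_2]$, hence $|f(p_1)-f(p_2)|\asymp X^{c-1}|p_1-p_2|$. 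Splitting the pairs at the crossover $|p_1-p_2|\asymp X^{8/9}$, where $2\tau$ and $X^{1-c}|p_1-p_2|^{-1}$ balance (recall $\tau=X^{1/9-c}$ by \eqref{tau}), the close pairs contribute $\ll\tau\log^2X\cdot X\cdot X^{8/9}=X^{2-c}\log^2X$ and the distant pairs $\ll X^{1-c}\log^2X\sum_{p_1}\sum_{p_2}|p_1-p_2|^{-1}\ll X^{2-c}\log^3X$, giving (i). Part (iii) is identical except that the inner integral is bounded by $\min\!\big(1,(\pi|f(p_1)-f(p_2)|)^{-1}\big)$; now the diagonal contributes $\ll\sum\log^2p\ll X\log X$ and the off-diagonal is $\ll X^{2-c}\log^3X=o(X)$, so the diagonal dominates and the bound $X\log^3X$ follows.

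For part (ii) I would pass to the integral analog of the same computation. Changing variables $t=f(y)$ in \eqref{Int}, the Jacobian is $y'(t)\asymp X^{1-c}$ by \eqref{Implicitfunction}--\eqref{y'est1}, while the new range $[f(\Delta_1),f(\Delta_2)]$ has length $\asymp X^{c}$. Thus
\[
\int_{-\tau}^{\tau}|I(\alpha)|^2\,d\alpha\ll X^{2-2c}\int_{f(\Delta_1)}^{f(\Delta_2)}\int_{f(\Delta_1)}^{f(\Delta_2)}\min\!\left(2\tau,\frac{1}{\pi|t_1-t_2|}\right)dt_1\,dt_2,
\]
and the inner integral in one variable is $\ll\log X$ while the outer one contributes the length $\asymp X^{c}$, so the whole expression is $\ll X^{2-2c}\cdot X^{c}\log X=X^{2-c}\log X$. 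The single power of the logarithm here, as opposed to the three in (i), simply reflects the absence of the $\log p$ weights.

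The only genuinely delicate points are the uniform derivative estimate $f'(y)\asymp X^{c-1}$ on all of $[\Delta_1,\Delta_2]$, already secured by the tailored choice of $\Delta_1,\Delta_2$, and the bookkeeping of the dyadic split together with the resulting logarithmic factors in the off-diagonal sum of (i), which is where the main obstacle, such as it is, lies. No deeper input is required: beyond the mean value theorem and the two elementary Fourier--integral bounds above, the argument is routine, and in particular none of Lemmas \ref{GrahamandKolesnik}--\ref{Heath-Brown} is needed for these second moments.
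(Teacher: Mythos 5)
Your proof is correct and follows essentially the same route as the paper: expand the square, bound the oscillatory integral by $\min(\tau,|f(p_1)-f(p_2)|^{-1})$, and convert the pair count into the uniform derivative estimate $f'\asymp X^{c-1}$ on $[\Delta_1,\Delta_2]$ (the paper phrases this via the inverse function, $y'\asymp X^{1-c}$, and uses a dyadic decomposition where you use a crossover split plus a harmonic sum, but these are the same mean-value-theorem argument with different bookkeeping). Your change-of-variables treatment of (ii) and the $\min(1,\cdot)$ bound for (iii) are exactly the "analogous" arguments the paper leaves to the reader.
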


\begin{proof}
We only prove  $\textmd{(i)}$. The cases $\textmd{(ii)}$ and $\textmd{(iii)}$  are analogous.

From \eqref{Salpha}  we write
\begin{align}\label{Squareout}
\int\limits_{-\tau}^\tau|S(\alpha)|^2\,d\alpha&=\sum\limits_{\Delta_1<p_1,p_2\leq\Delta_2}\log p_1\log p_2
\int\limits_{-\tau}^\tau e\big(\big(p_1^c\tan^\theta(\log p_1)-p_2^c\tan^\theta(\log p_2)\big)\alpha\big)\,d\alpha\nonumber\\
&\ll\sum\limits_{\Delta_1<p_1,p_2\leq\Delta_2}\log p_1\log p_2
\min\bigg(\tau,\frac{1}{|p_1^c\tan^\theta(\log p_1)-p_2^c\tan^\theta(\log p_2)|}\bigg)\nonumber\\
&\ll\tau\sum\limits_{\Delta_1<p_1,p_2\leq\Delta_2\atop{|p_1^c\tan^\theta(\log p_1)-p_2^c\tan^\theta(\log p_2)|\leq1/\tau}}\log p_1\log p_2\nonumber\\
&+\sum\limits_{\Delta_1<p_1,p_2\leq\Delta_2\atop{|p_1^c\tan^\theta(\log p_1)-p_2^c\tan^\theta(\log p_2)|>1/\tau}}
\frac{\log p_1\log p_2}{|p_1^c\tan^\theta(\log p_1)-p_2^c\tan^\theta(\log p_2)|}\nonumber\\
&\ll U\tau\log^2X+V\log^2X,
\end{align}
where
\begin{align}
\label{U}
&U=\sum\limits_{\Delta_1<n_1,n_2\leq\Delta_2\atop{|n_1^c\tan^\theta(\log n_1)-n_2^c\tan^\theta(\log n_2)|\leq1/\tau}}1\,,\\
\label{V}
&V=\sum\limits_{\Delta_1<n_1,n_2\leq\Delta_2\atop{|n_1^c\tan^\theta(\log n_1)-n_2^c\tan^\theta(\log n_2)|>1/\tau}}
\frac{1}{|n_1^c\tan^\theta(\log n_1)-n_2^c\tan^\theta(\log n_2)|}\,.
\end{align}
Let the interval $\big[n'_2, n''_2\big]$ is a solution of the system inequalities
\begin{equation}\label{System2}
\left|\begin{array}{ccc}
n_2\in[\Delta_1, \Delta_2] \quad\quad\quad\quad\quad\quad\quad\quad\quad\quad\quad \\
n_2^c\tan^\theta(\log n_2)\geq n_1^c\tan^\theta(\log n_1)-1/\tau\\
n_2^c\tan^\theta(\log n_2)\leq n_1^c\tan^\theta(\log n_1)+1/\tau
\end{array}\right..
\end{equation}
Consider the function $t(y)$ defined by \eqref{Implicitfunction} for
\begin{equation}\label{yn'n''}
y\in\big[n'_2, n''_2\big]\,.
\end{equation}
By   \eqref{Delta1},  \eqref{Delta2}, \eqref{Implicitfunction}, \eqref{Firstderivative}, \eqref{System2} and \eqref{yn'n''}
for the first derivative of $y$ as implicit function of $t$ we obtain
\begin{equation}\label{y'est2}
y'\asymp X^{1-c}\,.
\end{equation}
Using \eqref{Implicitfunction}, \eqref{U}, \eqref{System2} and the mean-value theorem we get
\begin{equation}\label{Uest1}
U\ll\mathop{\sum\limits_{\Delta_1<n_1\leq\Delta_2}\sum\limits_{\Delta_1<n_2\leq\Delta_2}}_
{n_1^c\tan^\theta(\log n_1)-1/\tau\leq n_2^c\tan^\theta(\log n_2)\leq n_1^c\tan^\theta(\log n_1)+1/\tau}1
\ll X + \frac{1}{\tau}\sum\limits_{\Delta_1<n_1\leq\Delta_2}y'(\xi)\,,
\end{equation}
where $\xi\asymp X^c$.
Taking into account  \eqref{Delta1},  \eqref{Delta2}, \eqref{y'est2} and \eqref{Uest1} we find
\begin{equation}\label{Uest2}
U\ll X+\frac{X^{2-c}}{\tau}\,.
\end{equation}
On the other hand from \eqref{V} we have
\begin{equation}\label{VVl}
V\leq\sum\limits_{l}V_l\,,
\end{equation}
where
\begin{equation*}
V_l=\sum\limits_{\Delta_1<n_1,n_2\leq\Delta_2\atop{l<|n_1^c\tan^\theta(\log n_1)-n_2^c\tan^\theta(\log n_2)|\leq2l}}
\frac{1}{|n_1^c\tan^\theta(\log n_1)-n_2^c\tan^\theta(\log n_2)|}
\end{equation*}
and $l$ takes the values $2^k/\tau,\,k=0,1,2,...$, with $l\ll X^c$.\\
Arguing as in $U$  and  using the mean-value theorem we deduce
\begin{align}\label{Vlest}
V_l&\ll\frac{1}{l}\mathop{\sum\limits_{\Delta_1<n_1\leq\Delta_2}\sum\limits_{\Delta_1<n_2\leq\Delta_2}}_
{n_1^c\tan^\theta(\log n_1)+l\leq n_2^c\tan^\theta(\log n_2)\leq n_1^c\tan^\theta(\log n_1)+2l}1\nonumber\\
&+\frac{1}{l}\mathop{\sum\limits_{\Delta_1<n_1\leq\Delta_2}\sum\limits_{\Delta_1<n_2\leq\Delta_2}}_
{n_1^c\tan^\theta(\log n_1)-2l\leq n_2^c\tan^\theta(\log n_2)\leq n_1^c\tan^\theta(\log n_1)-l}1\nonumber\\
&\ll\sum\limits_{n_1\in\Delta}y'(\xi)\nonumber\\
&\ll X^{2-c}\,.
\end{align}
The proof follows from \eqref{tau}, \eqref{Squareout}, \eqref{Uest2}, \eqref{VVl} and \eqref{Vlest}.
\end{proof}

\begin{lemma}\label{SIest} Assume that
\begin{equation}\label{DeltaalphaH}
\tau \leq |\alpha| \leq H \,.
\end{equation}
Set
\begin{equation}\label{SI}
S_I=\mathop{\sum\limits_{M<m\le M_1}a_m\sum\limits_{L<l\le L_1}}_{\Delta_1<ml\le \Delta_2}
e\big(\alpha m^cl^c \tan^\theta\big(\log (ml) \big)\big)
\end{equation}
and
\begin{equation}\label{SI'}
S'_I=\mathop{\sum\limits_{M<m\le M_1}a_m\sum\limits_{L<l\le L_1}}_{\Delta_1<ml\le \Delta_2}
e\big(\alpha m^cl^c \tan^\theta\big(\log (ml) \big)\big)\log l\,,
\end{equation}
where
\begin{equation}\label{Conditions1}
L\ge X^{\frac{13}{27}}\,,\quad M_1\le 2M\,,\quad L_1\le 2L\,,\quad a_m\ll \tau _5(m)\log \Delta_1\,.
\end{equation}
Then
\begin{equation*}
S_I\ll X^{\frac{17}{18}+\eta}\,.
\end{equation*}
\end{lemma}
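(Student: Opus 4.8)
The plan is to treat $S_I$ as a Type I sum: pull the $m$-summation outside, bound the inner sum over $l$ by the van der Corput type estimate of Lemma \ref{GrahamandKolesnik}, and let the divisor weights $a_m$ contribute only a harmless power of $\log X$. First I would record the basic features of the range. By \eqref{Delta1} and \eqref{Delta2}, for $\Delta_1<u\le\Delta_2$ one has $\log u-\pi\big[\tfrac{\log X}{\pi}\big]\in[\arctan\tfrac49,\arctan 2]$, so by $\pi$-periodicity $\tan(\log u)\in[\tfrac49,2]$ and $\Delta_1\asymp\Delta_2\asymp X$; in particular every trigonometric factor below is bounded and bounded away from zero. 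Since the summation in \eqref{SI} imposes $\Delta_1<ml\le\Delta_2$ while $m\asymp M$, $l\asymp L$, the sum is empty unless $ML\asymp X$, which I assume from now on.

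Next I would analyse the phase. Put $g(u)=u^c\tan^\theta(\log u)$, so that for fixed $m$ the inner phase is $f(l)=\alpha g(ml)$ and $f^{(r)}(l)=\alpha m^r g^{(r)}(ml)$. Writing $u=e^v$ and $D=\tfrac{d}{dv}$, an easy induction gives
\[
g^{(r)}(u)=u^{c-r}P_r(\log u),\qquad P_r=\prod_{i=0}^{r-1}\Big(D+c-i\Big)\tan^\theta v,
\]
and each $P_r$ is $\pi$-periodic and equals $t^{\theta-r}Q_r(t)$ with $t=\tan v$ and $Q_r$ an explicit polynomial (for instance $Q_1=\theta t^2+ct+\theta$ and $Q_2=\theta(\theta+1)t^4+\theta(2c-1)t^3+(2\theta^2+c^2-c)t^2+\theta(2c-1)t+\theta(\theta-1)$, both with positive coefficients). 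On $t\in[\tfrac49,2]$ a direct computation shows $Q_1,Q_2,Q_3$ are bounded away from zero for $1<c<\tfrac{10}{9}$ and $\theta>1$, whence $g^{(r)}(u)\asymp u^{c-r}$ and therefore
\[
f^{(r)}(l)\asymp \alpha m^c l^{c-r}\asymp\alpha m^c L^{c-r}=FL^{-r},\qquad r=1,2,3,
\]
where $F=\alpha m^c L^c\asymp\alpha X^c$ (using $m\asymp M$ and $ML\asymp X$).

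I would then apply Lemma \ref{GrahamandKolesnik} with $k=1$ (so $Q=2$ and $4Q-2=6$) and $N=L$ to each inner sum over $l\in I\subseteq(L,2L]$, which yields $\big|\sum_{l}e(f(l))\big|\ll F^{1/6}L^{1/2}+F^{-1}L$. Summing against $|a_m|\ll\tau_5(m)\log\Delta_1$ and using $\sum_{M<m\le 2M}\tau_5(m)\ll M\log^4X$ together with $F\asymp\alpha X^c$ and $ML\asymp X$ gives
\[
S_I\ll X^{\eta}\big(MF^{1/6}L^{1/2}+MF^{-1}L\big)\ll X^{\eta}\big(X\,L^{-1/2}(\alpha X^c)^{1/6}+\alpha^{-1}X^{1-c}\big).
\]
By the lower bound $|\alpha|\ge\tau=X^{1/9-c}$ from \eqref{tau} and \eqref{DeltaalphaH} the second term is $\ll X^{8/9+\eta}$. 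For the first term I insert the upper bound $|\alpha|\le H=X^{10/9-c}$ from \eqref{H} and the hypothesis $L\ge X^{13/27}$ from \eqref{Conditions1}, obtaining the exponent $1-\tfrac{13}{54}+\tfrac{5}{27}=1-\tfrac{1}{18}=\tfrac{17}{18}$; hence the first term is $\ll X^{17/18+\eta}$. Both contributions are $\ll X^{17/18+\eta}$, which is the assertion.

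The main obstacle I anticipate is verifying the \emph{two-sided} derivative bounds $f^{(r)}(l)\asymp FL^{-r}$ for all $r\le k+2$, since Lemma \ref{GrahamandKolesnik} genuinely requires matching upper and lower estimates; this is precisely where the construction of $\Delta_1,\Delta_2$ is essential, since it confines $\tan(\log u)$ to $[\tfrac49,2]$ and thereby keeps the polynomials $Q_r$ from vanishing. The only other delicate point is that the optimization is tight: the choice $k=1$, calibrated against $|\alpha|\le X^{10/9-c}$ and $L\ge X^{13/27}$, lands exactly on the target exponent $\tfrac{17}{18}$ with no room to spare.
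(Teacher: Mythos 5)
Your proposal is correct and follows essentially the same route as the paper: split off the inner sum over $l$ as a Type I sum, verify the two-sided derivative bounds $\partial_l^r f \asymp |\alpha| M^c L^{c-r}$ for $r=1,2,3$, apply Lemma \ref{GrahamandKolesnik} with $k=1$, and then use $\tau\le|\alpha|\le H$ together with $L\ge X^{\frac{13}{27}}$ (equivalently $M\ll X^{\frac{14}{27}}$, since $LM\asymp X$) to land on the exponent $\frac{17}{18}$. Your explicit verification of the nonvanishing of the polynomials $Q_r$ on $[\frac49,2]$ is in fact slightly more careful than the paper, which simply asserts the third-derivative asymptotics with ``proceeding in the same way.''
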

\begin{proof}
First we notice that \eqref{Delta1}, \eqref{Delta2}, \eqref{SI}  and \eqref{Conditions1} imply
\begin{equation}\label{LMasympX}
LM\asymp X\,.
\end{equation}
Denote
\begin{equation}\label{flm}
f(l, m)=\alpha m^cl^c \tan^\theta\big(\log (ml) \big)\,.
\end{equation}
By  \eqref{Delta1}, \eqref{SI}, \eqref{Conditions1} and \eqref{flm}  we write
\begin{equation}\label{SIest1}
S_I\ll X^\eta\sum\limits_{M<m\leq M_1}\bigg|\sum\limits_{L'<l\leq L'_1}e\big(f(l, m)\big)\bigg|\,,
\end{equation}
where
\begin{equation}\label{L'L1'}
L'=\max{\bigg\{L,\frac{\Delta_1}{m}\bigg\}}\,,\quad L_1'=\min{\bigg\{L_1,\frac{\Delta_2}{m}\bigg\}}\,.
\end{equation}
From  \eqref{Conditions1} and \eqref{L'L1'} for the sum in \eqref{SIest1} it follows
\begin{equation}\label{L'andL1'inL2L}
\left|\begin{array}{cccc}
M<m\leq M_1\quad \; \\
L'<l\leq L'_1  \quad \quad \\
\Delta_1<ml\le \Delta_2 \;\;\\
(L', L'_1]\subseteq (L, 2L]
\end{array}\right..
\end{equation}
On the other hand for the function defined by  \eqref{flm} we find
\begin{equation}\label{firstderivativel}
\frac{\partial f(l, m)}{\partial l}=\alpha m^cl^{c-1}\tan^{{\theta}-1}\big(\log(ml)\big)
\Big(c\tan\big(\log(ml)\big))+{\theta}\sec^2\big(\log(ml)\big)\Big)
\end{equation}
and
\begin{align}
\label{secondderivativel}
\frac{\partial^2f(l, m)}{\partial l^2} &=\alpha m^cl^{c-2}\tan^{\theta-2}\big(\log(ml)\big)
\Big(\big(2\theta\sec^2\big(\log(ml)\big)+c^2-c\big)\tan^2\big(\log(ml)\big)\nonumber\\
&+(2c-1)\theta\sec^2\big(\log(ml)\big)\tan\big(\log(ml)\big)+(\theta^2-\theta)\sec^4\big(\log(ml)\big)\Big)\,.
\end{align}
Now \eqref{Conditions1}, \eqref{L'andL1'inL2L}, \eqref{firstderivativel} and \eqref{secondderivativel} yields
\begin{equation}\label{firstderivativelasymp}
\frac{\partial f(d,l)}{\partial l}\asymp |\alpha| M^cL^{c-1}
\end{equation}
and
\begin{equation}\label{secondderivativelasymp}
\frac{\partial^2f(d,l)}{\partial l^2}\asymp |\alpha| M^cL^{c-2}\,.
\end{equation}
Proceeding in the same way we get
\begin{equation}\label{thirdderivativelasymp}
\frac{\partial^3f(d,l)}{\partial l^3}\asymp |\alpha| M^cL^{c-3}\,.
\end{equation}
Using \eqref{tau}, \eqref{H}, \eqref{DeltaalphaH}, \eqref{Conditions1},
\eqref{LMasympX}, \eqref{SIest1}, \eqref{L'andL1'inL2L}, \eqref{firstderivativelasymp}, \eqref{secondderivativelasymp},
\eqref{thirdderivativelasymp} and Lemma \ref{GrahamandKolesnik} with  $k=1$ we obtain
\begin{align*}\label{SIest1}
S_I&\ll X^\eta\sum\limits_{M<m\leq M_1}\bigg(|\alpha|^{\frac{1}{6}}M^{\frac{c}{6}} L^{\frac{c}{6}}L^{\frac{1}{2}}
+|\alpha|^{-1}M^{-c}L^{1-c}\bigg)\\
&\ll X^\eta\Big(M^{\frac{1}{2}}H^{\frac{1}{6}}X^{\frac{c}{6}+\frac{1}{2}}+\tau^{-1}X^{1-c}\Big)\\
&\ll X^\eta\Big(M^{\frac{1}{2}}X^{\frac{37}{54}}+X^{\frac{8}{9}}\Big)\\
&\ll  X^{\frac{17}{18}+\eta}\,.
\end{align*}
To estimate the sum defined by \eqref{SI'} we apply Abel's summation formula and proceed in the same way to deduce
\begin{equation*}
S_I'\ll  X^{\frac{17}{18}+\eta}\,.
\end{equation*}
This proves the lemma.
\end{proof}

\begin{lemma}\label{SIIest} Assume that
\begin{equation}\label{DeltaalphaH2}
\tau \leq |\alpha| \leq H \,.
\end{equation}
Set
\begin{equation}\label{SII}
S_{II}=\mathop{\sum\limits_{M<m\le M_1}a_m\sum\limits_{L<l\le L_1}}_{\Delta_1<ml\le \Delta_2}b_l
e\big(\alpha m^cl^c \tan^\theta\big(\log (ml) \big)\big)\,,
\end{equation}
where
\begin{equation}
\begin{split}\label{Conditions2}
&2^{-11}X^{\frac{1}{27}}\leq L\leq 2^7X^{\frac{1}{3}}\,,\quad M_1\le 2M\,,\quad L_1\le 2L\,,\\
&a_m\ll \tau _5(m)\log \Delta_1\,,\quad b_l\ll \tau _5(l)\log \Delta_1\,.
\end{split}
\end{equation}
Then
\begin{equation*}
S_{II}\ll  X^{\frac{17}{18}+\eta}\,.
\end{equation*}
\end{lemma}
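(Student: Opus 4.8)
The plan is to follow the strategy used for the Type I sum in Lemma~\ref{SIest}, the one genuinely new feature being an application of the Weyl--van der Corput inequality (Lemma~\ref{Squareoutlemma}) to dissolve the bilinear structure before Lemma~\ref{GrahamandKolesnik} can be brought to bear. As in \eqref{flm} the phase is $f(l,m)=\alpha m^cl^c\tan^\theta(\log(ml))$; it depends only on the product $ml$, and by \eqref{Delta1}, \eqref{Delta2} the argument $\log(ml)$ stays in a fixed interval on which $\tan(\log(ml))\asymp1$ and $\sec(\log(ml))\asymp1$, so $S_{II}$ is of exactly the same analytic type as a pure Piatetski--Shapiro Type II sum. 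First I would use Cauchy--Schwarz to detach one of the divisor-bounded weights $a_m,b_l$, reducing $|S_{II}|^2$ to $X^\eta$ times a sum of the shape $\sum|\sum e(f)|^2$; applying Lemma~\ref{Squareoutlemma} to the inner sum with a parameter $Q$ and then reversing the order of summation produces, for the off-diagonal indices, exponential sums over the long variable that no longer carry the arithmetic weights, while the diagonal term $q=0$ contributes only $\ll(1+M/Q)X^{1+\eta}$.

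For those clean sums $\sum e\big(f(l,m+q)-f(l,m)\big)$ I would compute the first three derivatives of the differenced phase exactly as in \eqref{firstderivativel}--\eqref{thirdderivativelasymp}, using once more that $\tan(\log(\cdot))\asymp\sec(\log(\cdot))\asymp1$ on $[\Delta_1,\Delta_2]$; the mean-value theorem applied to the $q$-difference turns \eqref{firstderivativel}--\eqref{secondderivativel} into bounds of the form $|\partial^{(r)}|\asymp\mathcal F\,(\mathrm{len})^{-r}$ for $r=1,2,3$ with a single $\mathcal F$ proportional to $|q||\alpha|$ times suitable powers of $M$ and $L$. Feeding this into Lemma~\ref{GrahamandKolesnik} with $k=1$, exactly as in Lemma~\ref{SIest}, then summing over $q$ and over the auxiliary variable and choosing $Q$ to balance the oscillatory term against the diagonal, is designed to collapse everything to $X^{\frac{17}{18}+\eta}$.

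The hard part is the uniform optimisation. Unlike the Type I case, where $L\ge X^{13/27}$ keeps the complementary variable short, here $L$ ranges over the whole window $X^{1/27}\le L\le X^{1/3}$, so $M\asymp X/L$ can be as large as $X^{26/27}$, and the differencing parameter $Q$ in Lemma~\ref{Squareoutlemma} must be chosen as a function of both $L$ and $|\alpha|$. The most delicate regime is short $l$ together with small amplitude $|\alpha|\asymp\tau$: there the phase completes only about $\tau X^c=X^{1/9}$ oscillations across $[\Delta_1,\Delta_2]$, and the target $X^{17/18}$ is precisely the square-root-cancellation value $|\alpha|^{-1/2}X^{1-c/2}$ at $|\alpha|=\tau$, which is the non-oscillatory contribution of Lemma~\ref{GrahamandKolesnik}, so essentially no slack remains. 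Checking that the differencing, together with the loss incurred in detaching the weights, can be arranged never to exceed this threshold throughout $1<c<\frac{10}{9}$ is the crux of the estimate, and is where the choice of $Q$ and of which variable to difference has to be made with care.
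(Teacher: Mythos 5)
Your plan has the same skeleton as the paper's proof (Cauchy--Schwarz to strip one divisor weight, Lemma~\ref{Squareoutlemma} to dissolve the other, Lemma~\ref{GrahamandKolesnik} on the weight-free sums), but the one decision that carries all the content of this lemma --- \emph{which} variable is differenced and \emph{which} variable carries the clean exponential sum --- is made inconsistently in your write-up, and the reading that matches your explicit formulas is the one that fails. You write the clean sums as $\sum e\big(f(l,m+q)-f(l,m)\big)$ and the diagonal as $(1+M/Q)X^{1+\eta}$: both say the shift $q$ falls on the long variable $m$. Then Cauchy--Schwarz must have detached $b_l$, the weights $a(m+q)\overline{a(m)}$ stay glued to the $m$-summation, and after interchanging summations the sums free of weights necessarily run over the \emph{short} variable $l$ --- the opposite of your claim that they run over the long variable. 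That orientation is quantitatively hopeless: pulling out $|a(m+q)\overline{a(m)}|\ll X^{\eta}$ and assuming even perfect square-root cancellation $\big|\sum_{l}e\big(f(l,m+q)-f(l,m)\big)\big|\ll L^{1/2}$ in every differenced sum, one gets
\begin{equation*}
|S_{II}|^2\ll X^{\eta}\bigg(LX+\frac{X^2}{Q}+L\cdot\frac{M}{Q}\cdot QML^{\frac{1}{2}}\bigg)
\ll X^{\eta}\bigg(\frac{X^2}{Q}+\frac{X^2}{L^{\frac{1}{2}}}\bigg)\,,
\end{equation*}
and at the bottom of the Type II range \eqref{Conditions2}, $L\asymp X^{\frac{1}{27}}$, the second term gives at best $S_{II}\ll X^{\frac{107}{108}+\eta}$, which misses the target $X^{\frac{17}{18}}=X^{\frac{102}{108}}$. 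No choice of $Q$ repairs this, because the loss comes from the shortness of $l$, not from the differencing parameter.

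The paper makes the opposite (and the only viable) choice: Cauchy--Schwarz detaches $a_m$, Lemma~\ref{Squareoutlemma} with the fixed value $Q=X^{\frac{1}{9}}$ is applied to the inner $l$-sum, so the products $b(l+q)\overline{b(l)}$ are independent of $m$ and factor out; the weight-free sums then run over the long variable $m$, of length $M\gg X^{\frac{2}{3}}$, with phase $f(l+q,m)-f(l,m)$ differenced in $l$, whose derivatives \eqref{firstderivativemasymp}, \eqref{secondderivativemasymp} feed Lemma~\ref{GrahamandKolesnik} with $k=0$ --- not $k=1$ as you propose; no third derivative is needed here, unlike in Lemma~\ref{SIest}. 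Beyond this, your closing paragraph explicitly defers the decisions (choice of $Q$, choice of which variable to difference) that constitute the proof, so even on a charitable reading the argument is incomplete rather than merely misstated. I will grant that your instinct about the danger zone is sound: when $L<Q=X^{\frac{1}{9}}$ the factor the paper writes as $L/Q$ is in truth $1+L/Q\asymp1$, the diagonal becomes $MX\asymp X^{2}/L$, and the claimed bound no longer follows from the displayed computation --- so the small-$L$ regime you single out is genuinely where this lemma is most fragile; but flagging the crux is not the same as resolving it.
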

\begin{proof}
First we notice that \eqref{Delta1}, \eqref{Delta2}, \eqref{SII}  and \eqref{Conditions2} give us
\begin{equation}\label{LMasympX2}
LM\asymp X\,.
\end{equation}
From \eqref{Delta1}, \eqref{flm}, \eqref{SII}, \eqref{Conditions2}, \eqref{LMasympX2}, Cauchy's inequality
and Lemma \ref{Squareoutlemma} with $Q=X^{\frac{1}{9}}$ it follows
\begin{align}\label{SIIest1}
S_{II}&\ll\left(\sum\limits_{M<m\le M_1}|a(m)|^2\right)^{\frac{1}{2}}
\left(\sum\limits_{M<m\le M_1}\bigg|\sum\limits_{L<l\le L_1\atop{\Delta_1<ml\le \Delta_2}}
b(l)e\big(f(l, m)\big)\bigg|^2\right)^{\frac{1}{2}}\nonumber\\
&\ll M^{\frac{1}{2}+\eta}\left(\sum\limits_{M<m\le M_1}\frac{L}{Q}\sum_{|q|<Q}\bigg(1-\frac{q}{Q}\bigg)
\sum\limits_{L<l, \, l+q\leq L_1\atop{\Delta_1<ml\le \Delta_2\atop{\Delta_1<m(l+q)\le \Delta_2}}}b(l+q)\overline{b(l)}
e\Big(f(l+q, m)-f(l, m)\Big)\right)^{\frac{1}{2}}\nonumber\\
&\ll M^{\frac{1}{2}+\eta}\Bigg(\frac{L}{Q}\sum\limits_{M<m\le M_1}\Bigg(L^{1+\eta}\nonumber\\
&\hspace{20mm}+\sum_{1\leq |q|<Q}\bigg(1-\frac{q}{Q}\bigg)
\sum\limits_{L<l, \, l+q\leq L_1\atop{\Delta_1<ml\le \Delta_2\atop{\Delta_1<m(l+q)\le \Delta_2}}}
b(l+q)\overline{b(l)}e\Big(f(l+q, m)-f(l, m)\Big)\Bigg)^{\frac{1}{2}}\nonumber\\
&\ll X^\eta\Bigg(\frac{X^2}{Q}+\frac{X}{Q}\sum\limits_{1\leq |q|\leq Q}
\sum\limits_{L<l, \, l+q\leq L_1}\bigg|\sum\limits_{M'<m\leq M_1'}e\big(f(l, m, q)\big)\bigg|\Bigg)^{\frac{1}{2}}\,,
\end{align}
where
\begin{equation}\label{M'M1'}
M'=\max{\bigg\{M,\frac{\Delta_1}{l},\frac{\Delta_1}{l+q}\bigg\}}\,,
\quad M_1'=\min{\bigg\{M_1,\frac{\Delta_2}{l},\frac{\Delta_2}{l+q}\bigg\}}\,.
\end{equation}
and
\begin{equation*}
f(l, m, q)=\alpha m^c\Big((l+q)^c \tan^\theta\big(\log (m(l+q))\big)-l^c \tan^\theta\big(\log (ml)\big)\Big)\,.
\end{equation*}
From  \eqref{Conditions2} and \eqref{M'M1'} for the sum  in \eqref{SIIest1} it follows
\begin{equation}\label{M'M1'inM2M}
\left|\begin{array}{ccccc}
L<l, \, l+q\leq L_1\quad\\
M'<m\leq M_1'\quad \quad \;\\
\Delta_1<ml\le \Delta_2 \quad\quad\;\\
\;\Delta_1<m(l+q)\le \Delta_2 \\
(M', M'_1]\subseteq (M, 2M]
\end{array}\right..
\end{equation}
We have
\begin{align}\label{firstderivativem}
\frac{\partial f(l, m, q)}{\partial m}
&=\alpha (q+l)^cm^{c-1}\tan^{\theta-1}\big(\log((q+l)m)\big)\nonumber\\
&\times\Big(c\tan\big(\log((q+l)m)\big)+\theta\sec^2\big(\log((q+l)m)\big)\Big)\nonumber\\
&-\alpha l^cm^{c-1}\tan^{\theta-1}\big(\log(ml)\big)
\Big(c\tan\big(\log(ml)\big))+{\theta}\sec^2\big(\log(ml)\big)\Big)
\end{align}
and
\begin{align}\label{secondderivativem}
\frac{\partial^2f(l, m, q)}{\partial m^2}
&=\alpha (q+l)^cm^{c-2}\tan^{\theta-2}\big(\log((q+l)m)\big)\nonumber\\
&\times\Big(\big(2\theta\sec^2\big(\log((q+l)m)\big)+c^2-c\big)\tan^2\big(\log((q+l)m)\big)\nonumber\\
&+(2c-1)\theta\sec^2\big(\log((q+l)m)\big)\tan\big(\log((q+l)m)\big)\nonumber\\
&+(\theta^2-\theta))\sec^4\big(\log((q+l)m)\big)\Big)\nonumber\\
&-\alpha l^cm^{c-2}\tan^{\theta-2}\big(\log(ml)\big)
\Big(\big(2\theta\sec^2\big(\log(ml)\big)+c^2-c\big)\tan^2\big(\log(ml)\big)\nonumber\\
&+(2c-1)\theta\sec^2\big(\log(ml)\big)\tan\big(\log(ml)\big)+(\theta^2-\theta)\sec^4\big(\log(ml)\big)\Big)\,.
\end{align}
From \eqref{Conditions2}, \eqref{M'M1'inM2M}, \eqref{firstderivativem} and \eqref{secondderivativem} we obtain
\begin{equation}\label{firstderivativemasymp}
\frac{\partial f(l, m, q)}{\partial m}\asymp |\alpha| L^c M^{c-1}
\end{equation}
and
\begin{equation}\label{secondderivativemasymp}
\frac{\partial^2f(l, m, q)}{\partial m^2}\asymp |\alpha| L^c M^{c-2}\,.
\end{equation}
Now \eqref{tau}, \eqref{H}, \eqref{DeltaalphaH2}, \eqref{Conditions2}, \eqref{LMasympX2},
\eqref{SIIest1}, \eqref{M'M1'inM2M}, \eqref{firstderivativemasymp},
\eqref{secondderivativemasymp} and Lemma \ref{GrahamandKolesnik} with  $k=0$  imply
\begin{align*}\label{SIIest2}
S_{II}&\ll X^\eta\Bigg(\frac{X^2}{Q}+\frac{X}{Q}\sum\limits_{1\leq q\leq Q}
\sum\limits_{L<l\leq L_1}\bigg(|\alpha|^{\frac{1}{2}}L^{\frac{c}{2}} M^{\frac{c}{2}}
+|\alpha|^{-1}L^{-c}M^{1-c}\bigg)\Bigg)^{\frac{1}{2}}\\
&\ll X^\eta\Bigg(\frac{X^2}{Q}+X\Big(H^{\frac{1}{2}} L X^{\frac{c}{2}}
+\tau^{-1}L^{1-c}M^{1-c}\Big)\Bigg)^{\frac{1}{2}}\\
&\ll  X^{\frac{17}{18}+\eta}\,.
\end{align*}
This proves the lemma.
\end{proof}

\begin{lemma}\label{SalphaXest} Let $\tau \leq |\alpha| \leq H$.
Then  for the exponential sum denoted by \eqref{Salpha} we have
\begin{equation*}
S(\alpha)\ll  X^{\frac{17}{18}+\eta}\,.
\end{equation*}
\end{lemma}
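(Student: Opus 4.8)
The plan is to reduce $S(\alpha)$ to a sum twisted by the von Mangoldt function, to split that sum by the Heath--Brown identity of Lemma~\ref{Heath-Brown} into Type~I and Type~II pieces, and then to estimate each piece by Lemma~\ref{SIest} and Lemma~\ref{SIIest} respectively.

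First I would replace the prime weight $\log p$ by $\Lambda(n)$. Since the contribution of proper prime powers is
\[
\sum_{\substack{p^k\le\Delta_2\\ k\ge 2}}\log p\ll\Delta_2^{1/2}\log\Delta_2\ll X^{1/2+\eta},
\]
and since \eqref{Delta1}--\eqref{Delta2} give $\Delta_1,\Delta_2\asymp X$, we have
\[
S(\alpha)=\sum_{\Delta_1<n\le\Delta_2}\Lambda(n)\,e\big(\alpha n^c\tan^\theta(\log n)\big)+O\big(X^{1/2+\eta}\big).
\]

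Next I would apply Lemma~\ref{Heath-Brown} to the sum on the right with $G(n)=e(\alpha n^c\tan^\theta(\log n))$, $P=\Delta_1$, $P_1=\Delta_2$, and the parameters
\[
U=2^{-11}X^{1/27},\qquad V=2^{7}X^{1/3},\qquad Z=X^{13/27}.
\]
One must check the hypotheses $U^2\le Z$, $128UZ^2\le P_1$ and $2^{18}P_1\le V^3$; here the key observation is that, by \eqref{Delta1}--\eqref{Delta2}, the endpoints $\Delta_1,\Delta_2$ depend only on $[\log X/\pi]$, so that $\Delta_2\asymp X$ with absolute constants, and the explicit constants $2^{-11}$ and $2^{7}$ appearing in \eqref{Conditions2} are precisely chosen to absorb the worst case of the ratio $\Delta_2/X$. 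The identity then expresses the sum as $O(\log^6 X)$ sums, each either of Type~I with $L\ge Z=X^{13/27}$, which is exactly the range \eqref{Conditions1} of Lemma~\ref{SIest}, or of Type~II with $U\le L\le V$, that is $2^{-11}X^{1/27}\le L\le 2^{7}X^{1/3}$, which is exactly the range \eqref{Conditions2} of Lemma~\ref{SIIest}.

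Finally, bounding each Type~I sum (in both its unweighted and $\log l$-weighted shapes) by Lemma~\ref{SIest} and each Type~II sum by Lemma~\ref{SIIest}, every one of the $O(\log^6 X)$ pieces is $\ll X^{17/18+\eta}$. Summing and absorbing the factor $\log^6 X$ together with the prime-power error $O(X^{1/2+\eta})$ into $X^\eta$ gives $S(\alpha)\ll X^{17/18+\eta}$. The one genuinely delicate point is the parameter bookkeeping in applying the identity: one has to make sure that the Type~I and Type~II ranges emitted by Lemma~\ref{Heath-Brown} land inside the intervals on which Lemmas~\ref{SIest} and~\ref{SIIest} were established, and it is this matching that forces the precise exponents $\tfrac{13}{27},\tfrac{1}{27},\tfrac13$ and the explicit powers of $2$ in those lemmas.
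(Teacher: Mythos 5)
Your proposal is correct and follows essentially the same route as the paper: replace $\log p$ by $\Lambda(n)$ at a cost of $O(X^{1/2+\eta})$, apply Lemma~\ref{Heath-Brown} with exactly the paper's parameters $U=2^{-11}X^{1/27}$, $V=2^{7}X^{1/3}$, $Z=X^{13/27}$, and bound the resulting Type~I and Type~II sums by Lemma~\ref{SIest} and Lemma~\ref{SIIest}. Your explicit verification of the hypotheses $U^2\le Z$, $128UZ^2\le P_1$ and $2^{18}P_1\le V^3$ (using $2^{-3}X<\Delta_2<2^2X$) is in fact slightly more careful than the paper, which records the bounds on $\Delta_1,\Delta_2$ but leaves the check implicit.
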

\begin{proof}
In order to prove the lemma  we will use the formula
\begin{equation}\label{Lambdalog2}
S(\alpha)=S^\ast(\alpha)+\mathcal{O}\Big(X^{\frac{1}{2}+\varepsilon}\Big)\,,
\end{equation}
where
\begin{equation}\label{Sast}
S^\ast(\alpha)=\sum\limits_{\Delta_1<n\leq\Delta_2}\Lambda(n)e\big(\alpha n^c\tan^\theta(\log n)\big)\,.
\end{equation}
Taking into account \eqref{Delta1} and \eqref{Delta2} we have that
\begin{equation}\label{Delta1Delta2inequalities}
\Delta_2<2\Delta_1\,, \quad  2^{-4}X<\Delta_1< 2X\,,  \quad  2^{-3}X<\Delta_2< 2^2X\,,
\end{equation}
thus we choose
\begin{equation}\label{UVZ}
U=2^{-11}X^{\frac{1}{27}}\,,\quad V=2^7X^{\frac{1}{3}}\,,\quad Z=X^{\frac{13}{27}}\,.
\end{equation}
According to Lemma \ref{Heath-Brown}, the sum $S^\ast(\alpha)$
can be decomposed into $O\Big(\log^6\Delta_1\Big)$ sums, each of which is either of Type I
\begin{equation*}
\mathop{\sum\limits_{M<m\le M_1}a_m\sum\limits_{L<l\le L_1}}_{\Delta_1<ml\le \Delta_2}
e\big(\alpha m^cl^c \tan^\theta\big(\log (ml) \big)\big)
\end{equation*}
and
\begin{equation*}
\mathop{\sum\limits_{M<m\le M_1}a_m\sum\limits_{L<l\le L_1}}_{\Delta_1<ml\le \Delta_2}
e\big(\alpha m^cl^c \tan^\theta\big(\log (ml) \big)\big)\log l\,,
\end{equation*}
where
\begin{equation*}
L\ge Z\,,\quad M_1\le 2M\,,\quad L_1\le 2L\,,\quad a_m\ll \tau _5(m)\log \Delta_1
\end{equation*}
or of Type II
\begin{equation*}
\mathop{\sum\limits_{M<m\le M_1}a_m\sum\limits_{L<l\le L_1}}_{\Delta_1<ml\le \Delta_2}b_l
e\big(\alpha m^cl^c \tan^\theta\big(\log (ml) \big)\big)
\end{equation*}
where
\begin{equation*}
U\le L\le V\,,\quad M_1\le 2M\,,\quad L_1\le 2L\,,\quad
a_m\ll \tau _5(m)\log \Delta_1\,,\quad b_l\ll \tau _5(l)\log \Delta_1\,.
\end{equation*}
Using \eqref{Sast}, \eqref{Delta1Delta2inequalities}, \eqref{UVZ}, Lemma \ref{SIest} and  Lemma \ref{SIIest} we deduce
\begin{equation}\label{Sastest}
S^\ast(\alpha)\ll  X^{\frac{17}{18}+\eta}\,.
\end{equation}
Bearing in mind \eqref{Lambdalog2} and  \eqref{Sastest} we establish the statement in the lemma.
\end{proof}

\section{Proof of the Theorem}
\indent

Consider the sum
\begin{equation*}
\Gamma= \sum\limits_{\Delta_1<p_1,p_2,p_3\leq\Delta_2
\atop{|p^c_1\tan^\theta(\log p_1)+  p^c_2\tan^\theta(\log p_2)+ p^c_3\tan^\theta(\log p_3)-N|
<\varepsilon}}\log p_1\log p_2\log p_3\,.
\end{equation*}
The theorem will be proved if we show that $\Gamma\rightarrow\infty$ as $X\rightarrow\infty$.

According to the definition of $\psi(y)$ and the inverse Fourier transformation formula we have
\begin{align}\label{GammaGamma0}
\Gamma\geq\Gamma_0
&= \sum\limits_{\Delta_1<p_1,p_2,p_3\leq\Delta_2}
\psi\big(p^c_1\tan^\theta(\log p_1)+  p^c_2\tan^\theta(\log p_2)+ p^c_3\tan^\theta(\log p_3) -N\big)\log p_1\log p_2\log p_3\nonumber\\
&= \sum\limits_{\Delta_1<p_1,p_2,p_3\leq\Delta_2}\log p_1\log p_2\log p_3\nonumber\\
&\times\int\limits_{-\infty}^{\infty}\Psi(\alpha)e\big(\big(y_1^c\tan^\theta(\log y_1)+y_2^c\tan^\theta(\log y_2)
+y_3^c\tan^\theta(\log y_3)-N\big)\alpha\big)\,d\alpha \nonumber\\
&=\int\limits_{-\infty}^{\infty}S^3(\alpha)\Psi(\alpha)e(-N\alpha)\,d\alpha.
\end{align}
We decompose $\Gamma_0$ in three parts
\begin{equation}\label{Gamma0decomp}
\Gamma_0=\Gamma_1+\Gamma_2+\Gamma_3\,,
\end{equation}
where
\begin{align}
\label{Gamma1}
&\Gamma_1=\int\limits_{-\tau}^{\tau}S^3(\alpha)\Psi(\alpha)e(-N\alpha)\,d\alpha,\\
\label{Gamma2}
&\Gamma_2=\int\limits_{\tau\leq|\alpha|\leq H}S^3(\alpha)\Psi(\alpha)e(-N\alpha)\,d\alpha,\\
\label{Gamma3}
&\Gamma_3=\int\limits_{|\alpha|>H}S^3(\alpha)\Psi(\alpha)e(-N\alpha)\,d\alpha.
\end{align}

\subsection{Estimation of $\mathbf{\Gamma_1}$}

Denote the integrals
\begin{align}
\label{Thetataudef}
&\Theta_\tau=\int\limits_{-\tau}^\tau I^3(\alpha) \Psi(\alpha)e(-N\alpha)\,d\alpha\,,\\
\label{Thetadef}
&\Theta=\int\limits_{-\infty}^\infty I^3(\alpha) \Psi(\alpha)e(-N\alpha)\,d\alpha\,.
\end{align}
For $\Gamma_1$ denoted by  \eqref{Gamma1} we have
\begin{equation}\label{Gamma1ThetaThetatau}
\Gamma_1=(\Gamma_1-\Theta_\tau)+(\Theta_\tau-\Theta)+\Theta\,.
\end{equation}
From \eqref{Gamma1}, \eqref{Thetataudef}, Lemma \ref{Fourier}, Lemma \ref{SIlemma}
and Lemma \ref{3Int} $\textmd{(i)}$, $\textmd{(ii)}$ we find
\begin{align}\label{Gamma1Thetatau}
|\Gamma_1-\Theta_\tau|&\ll\int\limits_{-\tau}^{\tau}
|S^3(\alpha)-I^3(\alpha)||\Psi(\alpha)|\,d\alpha\nonumber\\
&\ll\varepsilon\int\limits_{-\tau}^{\tau}
|S(\alpha)-I(\alpha)|\big(|S(\alpha)|^2+|I(\alpha)|^2\big)\,d\alpha\nonumber\\
&\ll\varepsilon Xe^{-(\log X)^{1/5}}
\Bigg(\int\limits_{-\tau}^{\tau}|S(\alpha)|^2\,d\alpha
+\int\limits_{-\tau}^{\tau}|I(\alpha)|^2\,d\alpha\bigg)\nonumber\\
&\ll\varepsilon X^{3-c}e^{-(\log X)^{1/6}}\,.
\end{align}
We have
\begin{equation}\label{Firstderivative2}
\frac{\partial\alpha y^c\tan^\theta(\log y)\big)}{\partial y}=
\alpha y^{c-1}\Big(c\tan(\log y)+\theta\sec^2(\log y)\Big)\tan^{\theta-1}(\log y)\,.
\end{equation}
Using \eqref{tau}, \eqref{Delta1}, \eqref{Delta2}, \eqref{Int}, \eqref{Thetataudef}, \eqref{Thetadef}, \eqref{Firstderivative2},
Lemma \ref{Fourier} and Lemma \ref{Iest} we deduce
\begin{align}\label{ThetatauTheta}
|\Theta_\tau-\Theta|&\ll\int\limits_{\tau}^{\infty}|I(\alpha)|^3|\Psi(\alpha)|\,d\alpha
\ll\frac{\varepsilon}{X^{3(c-1)}}\int\limits_{\tau}^{\infty}\frac{d\alpha}{\alpha^3}\nonumber\\
&\ll \frac{\varepsilon}{\tau^2X^{3(c-1)}}\ll\frac{\varepsilon X^{3-c}}{\log X}\,.
\end{align}
Bearing in mind \eqref{Thetadef}, \eqref{Gamma1ThetaThetatau}, \eqref{Gamma1Thetatau},
\eqref{ThetatauTheta} and Lemma \ref{Thetaest} we obtain
\begin{equation}\label{Gamma1est}
\Gamma_1\gg \varepsilon X^{3-c}\,.
\end{equation}

\subsection{Estimation of $\mathbf{\Gamma_2}$}

In this subsection we use Cai's \cite{Cai3} argument.

Taking into account Lemma \ref{Fourier} and Lemma \ref{3Int} $\textmd{(iii)}$ we get
\begin{align}\label{InttauH}
\int\limits_{\tau\leq|\alpha|\leq H}|S(\alpha)|^2|\Psi(\alpha)|\,d\alpha
&\ll\varepsilon\int\limits_{\tau}^{1/\varepsilon}|S(\alpha)|^2\,d\alpha
+\int\limits_{1/\varepsilon}^{H}|S(\alpha)|^2\,\frac{d\alpha}{\alpha}\nonumber\\
&\ll\varepsilon\sum\limits_{0\leq n\leq 1/\varepsilon}\int\limits_{n}^{n+1}|S(\alpha)|^2\,d\alpha
+\sum\limits_{1/\varepsilon-1\leq n\leq H}\frac{1}{n}\int\limits_{n}^{n+1}|S(\alpha)|^2\,d\alpha\nonumber\\
&\ll X \log^4X\,.
\end{align}
Denote
\begin{equation}\label{At}
A(t)=\sum\limits_{\Delta_1<n\leq \Delta_2} e\big(t n^c\tan^\theta(\log n)\big)\,.
\end{equation}
From \eqref{Delta1}, \eqref{Delta2} and \eqref{Gamma2} we write
\begin{align}\label{Gamma2est1}
|\Gamma_2|&=\Bigg|\sum\limits_{\Delta_1<p\leq \Delta_2}(\log p)\int\limits_{\tau\leq|\alpha|\leq H}e\big(\alpha p^c\tan^\theta(\log p)\big)
S^2(\alpha)\Psi(\alpha)e(-N\alpha)\,d\alpha\Bigg|\nonumber\\
&\leq\sum\limits_{\Delta_1<p\leq \Delta_2}(\log p)\Bigg|\int\limits_{\tau\leq|\alpha|\leq H}e\big(\alpha p^c\tan^\theta(\log p)\big)
S^2(\alpha)\Psi(\alpha)e(-N\alpha)\,d\alpha\Bigg|\nonumber\\
&\leq(\log X)\sum\limits_{\Delta_1<n\leq \Delta_2}\Bigg|\int\limits_{\tau\leq|\alpha|\leq H}e\big(\alpha n^c\tan^\theta(\log n)\big)
S^2(\alpha)\Psi(\alpha)e(-N\alpha)\,d\alpha\Bigg|\,.
\end{align}
By \eqref{Delta1}, \eqref{Delta2}, \eqref{Gamma2est1} and Cauchy's inequality we deduce
\begin{align}\label{Gamma2est2}
|\Gamma_2|^2
&\leq X(\log X)^2\sum\limits_{\Delta_1<n\leq \Delta_2}\Bigg|\int\limits_{\tau\leq|\alpha|\leq H}e\big(\alpha n^c\tan^\theta(\log n)\big)
S^2(\alpha)\Psi(\alpha)e(-N\alpha)\,d\alpha\Bigg|^2\nonumber\\
&= X(\log X)^2\int\limits_{\tau\leq|y|\leq H}\overline{S^2(y)\Psi(y)e(-Ny)}\,dy
\int\limits_{\tau\leq|t|\leq H}S^2(t)\Psi(t)e(-Nt)A(t-y)\,dt\nonumber\\
&\leq X(\log X)^2\int\limits_{\tau\leq|y|\leq H}|S(y)|^2|\Psi(y)|\,dy
\int\limits_{\tau\leq|t|\leq H}|S(t)|^2|\Psi(t)||A(t-y)|\,dt\,.
\end{align}
We have
\begin{equation}\label{firstderivativet}
\frac{\partial t y^c\tan^\theta(\log y)}{\partial y} =t y^{c-1}\tan^{\theta-1}(\log y)
\Big(c\tan(\log y)+\theta\sec^2(\log y)\Big)
\end{equation}
and
\begin{align}\label{secondderivativet}
\frac{\partial^2t y^c\tan^\theta(\log y)}{\partial y^2} &=t y^{c-2}\tan^{\theta-2}(\log y)
\Big(\big(2\theta\sec^2(\log y)+c^2-c\big)\tan^2(\log y)\nonumber\\
&+(2c-1)\theta\sec^2(\log y)\tan(\log y)+(\theta^2-\theta)\sec^4(\log y)\Big)\,.
\end{align}
From \eqref{Delta1}, \eqref{Delta2}, \eqref{Delta1Delta2inequalities}, \eqref{firstderivativet} and  \eqref{secondderivativet} it follows
\begin{equation}\label{firstderivativetest}
\frac{\partial t y^c\tan^\theta(\log y)}{\partial y}\asymp  |t|\Delta_1^{c-1} \quad \mbox{ for } \quad y\in [\Delta_1, \Delta_2]
\end{equation}
and
\begin{equation}\label{secondderivativetest}
\frac{\partial^2t y^c\tan^\theta(\log y)}{\partial y^2}\asymp  |t|\Delta_1^{c-2} \quad \mbox{ for } \quad y\in [\Delta_1, \Delta_2]\,.
\end{equation}
Now \eqref{Delta1}, \eqref{Delta2}, \eqref{Delta1Delta2inequalities}, \eqref{At}, \eqref{firstderivativetest}, \eqref{secondderivativetest}
and Lemma \ref{GrahamandKolesnik} with  $k=0$  imply
\begin{equation}\label{Atest}
A(t)\ll \min\Big(|t|^{\frac{1}{2}}X^{\frac{c}{2}}+|t|^{-1}X^{1-c}, X\Big)\,.
\end{equation}
Using  \eqref{H}, \eqref{InttauH}, \eqref{Atest}, Lemma \ref{Fourier} and Lemma \ref{SalphaXest} we write
\begin{align}\label{IntPsiAt}
&\int\limits_{\tau\leq|t|\leq H}|S(t)|^2|\Psi(t)||A(t-y)|\,dt\nonumber\\
&\ll\int\limits_{\tau\leq|t|\leq H\atop{|t-y|\leq X^{-c}}}|S(t)|^2|\Psi(t)||A(t-y)|\,dt
+\int\limits_{\tau\leq|t|\leq H\atop{X^{-c}<|t-y|\leq2H}}|S(t)|^2|\Psi(t)||A(t-y)|\,dt\nonumber\\
&\ll \varepsilon  X\int\limits_{\tau\leq|t|\leq H\atop{|t-y|\leq X^{-c}}}|S(t)|^2\,dt
+\int\limits_{\tau\leq|t|\leq H\atop{X^{-c}<|t-y|\leq2H}}|S(t)|^2|\Psi(t)|
\Bigg(|t-y|^{\frac{1}{2}}X^{\frac{c}{2}}+\frac{X^{1-c}}{|t-y|}\Bigg)\,dt\nonumber\\
&\ll \varepsilon X\max\limits_{\tau\leq|t|\leq H}|S(t)|^2\int\limits_{|t-y|\leq X^{-c}}\,dt
+H^{\frac{1}{2}}X^{\frac{c}{2}}\int\limits_{\tau\leq|t|\leq H}|S(t)|^2|\Psi(t)|\,dt\nonumber\\
&+\varepsilon X^{1-c}\max\limits_{\tau\leq|t|\leq H}|S(t)|^2\int\limits_{X^{-c}<|t-y|\leq2H}\frac{1}{|t-y|}\,dt\nonumber\\
&\ll \varepsilon  X^{1-c+\eta}\max\limits_{\tau\leq|t|\leq H}|S(t)|^2
+H^{\frac{1}{2}}X^{\frac{c}{2}+1+\eta}\nonumber\\
&\ll \varepsilon X^{\frac{26}{9}-c+\eta}\,.
\end{align}
Bearing in mind \eqref{varepsilon}, \eqref{InttauH}, \eqref{Gamma2est2} and \eqref{IntPsiAt}  we  obtain
\begin{equation}\label{Gamma2est3}
\Gamma_2\ll \varepsilon^\frac{1}{2}X^{\frac{22}{9}-\frac{c}{2}+\eta}\ll \frac{\varepsilon X^{3-c}}{\log X}\,.
\end{equation}

\subsection{Estimation of $\mathbf{\Gamma_3}$}

Using \eqref{varepsilon}, \eqref{H}, \eqref{Salpha}, \eqref{Gamma3}, Lemma \ref{Fourier} and choosing $k=[\log X]$ we find
\begin{align}\label{Gama3est}
\Gamma_3&\ll \int\limits_{H}^{\infty}|S(\alpha)|^3|\Psi(\alpha)|\,d\alpha\nonumber\\
&\ll X^3\int\limits_{H}^{\infty}\frac{1}{\alpha}\bigg(\frac{k}{2\pi \alpha\varepsilon/8}\bigg)^k\,d\alpha\nonumber\\
&=X^3\bigg(\frac{4k}{\pi\varepsilon H}\bigg)^k\ll1.
\end{align}

\subsection{The end of the proof}

Bearing in mind \eqref{GammaGamma0}, \eqref{Gamma0decomp}, \eqref{Gamma1est}, \eqref{Gamma2est3} and
\eqref{Gama3est} we establish that
\begin{equation}\label{Gammaest}
\Gamma\gg \varepsilon X^{3-c}\,.
\end{equation}
Now \eqref{varepsilon} and \eqref{Gammaest}
imply that $\Gamma\rightarrow\infty$ as $X\rightarrow\infty$.

The proof of the Theorem is complete.

\vskip20pt
\footnotesize
\begin{flushleft}
S. I. Dimitrov\\
Faculty of Applied Mathematics and Informatics\\
Technical University of Sofia \\
8, St.Kliment Ohridski Blvd. \\
1756 Sofia, BULGARIA\\
e-mail: sdimitrov@tu-sofia.bg\\
\end{flushleft}

\end{document}